\newtheorem{theorem}{\bf Theorem}[section]
\newtheorem{lemma}[theorem]{\bf Lemma}
\newtheorem{proposition}[theorem]{\bf Proposition}
\newtheorem{remark}[theorem]{\bf Remark}
 	\definecolor{azure(colorwheel)}{rgb}{0.0, 0.5, 1.0}
  	\definecolor{awesome}{rgb}{1.0, 0.13, 0.32}
\begin{document}
\title[Affine curvature lines of surfaces in $3$-space]
{Affine curvature lines of surfaces in $3$-space\\
}
\author[M.~Barajas]{Mart\'{i}n Barajas S.}
\address[M.~Barajas]{Instituto de Matem\'aticas e Estat\'istica,
Universidade Federal de Goi\'as-Goi\^ania-GO, Brazil}
\email{mbarajas@ufg.br}

\author[M. Craizer]{Marcos Craizer}
\address[M. Craizer]{Departamento de Matem\'atica, PUC-Rio, Rio de Janeiro-RJ, Brazil}
\email{craizer@puc-rio.br}
\author[R.~Garcia]{Ronaldo Garcia}
\address[R.~Garcia]{Instituto de Matem\'aticas e Estat\'istica,
Universidade Federal de Goi\'as-Goi\^ania-GO, Brazil}
\email{ragarcia@ufg.br}
\subjclass[2010]{Primary 53A15; secondary  37C75, 34A09, 53C12 }
\keywords{affine differential geometry, affine curvature lines, affine umbilic  points, parabolic points.}
%, affine cylindrical pedal surface.}
%
%\date{\today}
%
%
\begin{abstract}
In this work we study the affine principal lines of surfaces in $3$-space.
 We consider the binary differential equation  of the affine curvature lines and obtain the topological
 models of these curves  near the affine umbilic points (elliptic and hyperbolic).
 We also describe  the generic  behavior of affine curvature lines in the neighborhood of points with double eigenvalues  (but not umbilics) of the affine shape operator and  parabolic points.
\end{abstract}
\maketitle
\setlength{\baselineskip}{14pt}
%
%%%%%%%%%%%%%%%%%%%%%%%%%%%%%%%%%%%%%%%%%%%%%%%%%%%%%%%%%%%%%%%%%%%%%%%%%%%
%%%%  Introduction
%%%%%%%%%%%%%%%%%%%%%%%%%%%%%%%%%%%%%%%%%%%%%%%%%%%%%%%%%%%%%%%%%%%%%%%%%%%
%
\section{Introduction}
In the context of Euclidean differential geometry of surfaces immersed in $\mathbb{R}^3$, the principal configuration of a surface 
%was defined by C. Gutierrez and J. Sotomayor \cite{GS-1982}, \cite{GS-1983} and 
consists of the  two orthogonal principal foliations: the leaves are the principal lines and the umbilic points are the singularities of both foliations. Near the umbilic points the results of Darboux \cite{Da-1896}
%were revisited and the generic local behavior of the principal configuration were established for surfaces of class $C^r, r\geq 4$. 
say that there are three distinct topological models, see Fig. \ref{fig1}. See also \cite{Bruce1989} and  \cite{GS-1982}.

 It is worth to mention that G. Monge \cite{Mo-1796} 
% introduced the notion of principal lines and 
described the global behavior of principal lines in the ellipsoid $x^2/a^2+y^2/b^2+z^2/c^2=1$. For a  survey about the origins and recent developments in this subject of research   see \cite{Garcia2009} and \cite{GaSo-2016}.

The relation between Euclidean and affine principal lines was considered by Su Buchin, \cite{Buchin}. But in general, even the umbilic points are not coincident. The Euclidean  umbilic surfaces are  planes and spheres. On the order hand, the affine umbilic surfaces, also called affine spheres, include  all non degenerate quadrics, some cubic surfaces and many others, see  \cite[Chapter 3]{Simon},   \cite[Chapter 3]{Nomizu1994}, \cite{Craizer-2008}, \cite{Fox-2012}, \cite{Milan-2017}. For a survey on affine spheres see \cite{Loftin2010}.

In this work we study the principal lines in the context of affine differential geometry of surfaces immersed in $3$-space.
The main goal here is to describe the  local behavior of   affine principal lines near the affine umbilic points (elliptic and hyperbolic), points with double eigenvalues of the affine shape operator (but not umbilics) and the parabolic set.

An affine   configuration is the triple formed by the two orthogonal affine  foliations, whose leaves are the affine   curvature lines and its singular set. 

Two affine  configurations are said to be locally topologically equivalent if there is a germ of homeomorphim sending the affine  lines of curvature of the fisrt configuration into the corresponding ones of the second affine configuration and also preserving the singularities (affine elliptic and hyperbolic umbilic points,   points with double eigenvalues,  parabolic set, discriminant set).

Analogously, two binary differential equations are said to be locally topologically equivalent if there is a germ of homeomorphism sending the corresponding integral curves of the first to the second binary equation and preserving the singularities.

The paper is organized as follows. In Section \ref{sec:prelimi} the basic concepts are introduced and the differential equation of affine principal lines is established.
In Section \ref{section3} the local behavior of affine principal lines near affine umbilic points are described, see Propositions \ref{prop_A1+} and \ref{prop_A1-}. 
 In Section \ref{section4} the local behavior of lines of curvature near points with double eigenvalues of the affine shape operator is analyzed. 
 Section \ref{section5} is addressed to the study of affine principal lines near   parabolic points. The main result is Theorem \ref{Thm_confg_princ_affins}.

%
%%%%%%%%%%%%%%%%%%%%%%%%%%%%%%%%%%%%%%%%%%%%%%%%%%%%%%%%%%%%%%%%%%%%%%%%%%%
%%%%  Preliminaries
%%%%%%%%%%%%%%%%%%%%%%%%%%%%%%%%%%%%%%%%%%%%%%%%%%%%%%%%%%%%%%%%%%%%%%%%%%%
%

\section{Preliminaries}\label{sec:prelimi}

Let $S$ be a smooth ($C^\infty$) surface   in the 3-dimensional affine space. 
Outside the parabolic set, we endow $S$ with the Berwald-Blaschke metric given by
\begin{equation}\label{Iaff}
\mathcal{G}=\mid K_e\mid^{-\frac 14} II_e,
\end{equation}
where $K_e$ is the Euclidean Gaussian curvature and $II_e$ is the Euclidean second fundamental form of $S$. The Berwald-Blaschke metric is also called the  affine first fundamental   form and will denoted by $I_a$.  

There is a single transversal field $\xi$ defined on $S$ such that $d\xi\subset TS$ and the area form on $S$ defined by $\xi$ coincides 
with the area given by the affine first fundamental form. 
 The vector field $\xi$ is called affine normal field (also called Blaschke normal field) and locally it is uniquely determined up to a direction sign (\cite{Blaschke1923,Buchin1983,Calabi1982,Dillen1993,Nomizu1994}).\\

We can write, for $p\in S$ and $v\in T_pS$, 
\begin{equation}\label{eq:xi}
d\xi(p) v=B(p)v,
\end{equation}
where $B(p)$ is a linear transformation of $T_pS$, called affine shape operator. It is well-known that $B$ is self-adjoint with respect to 
$\mathcal{G}$. Thus, if $\mathcal{G}$ is positive-definite, the eigenvalues of $B$ are real and the eigenvectors are $\mathcal{G}$-orthogonal.
The real eigenvalues of $B$ are the affine principal curvatures and the eigenvectors are the affine principal directions.  
We say that $p$ is (affine) umbilic if $B(p)$ is a multiple of the identity. In the region where the eigenvalues 
of $B$ are real and outside umbilic points, there exists a pair of foliations tangent to the affine principal directions, called affine foliations of affine curvature lines.
When the Euclidean Gauss curvature is negative, the eigenvalues of the affine shape operator are not always real. 
A point is called a double $\xi$-direction point if there is a single double affine principal direction \cite{Davis2008}. At such points, both principal directions coincide. The set of 
double $\xi$-directions bounds the region where the affine principal curvature lines are defined.

Consider now the case that $S$ is parameterized by $X:U\subset\mathbb{R}^2\to S$. Let
\begin{equation*}
L=\left|X_u,X_v,X_{uu}\right|,\quad M=\left|X_u,X_v,X_{uv}\right|,\quad N=\left|X_u,X_v,X_{vv}\right|,
\end{equation*}
where $X_u=\frac{\partial X}{\partial u}$, $X_v=\frac{\partial X}{\partial v}$ and $\left|a,b,c\right|$ denotes the determinant of the vectors $a,b,c$. One can verify that the Berwald-Blaschke metric is given by
\begin{equation}\label{eq:metricaG}
\aligned
\mathcal{G}=&g_{11}du^2+2g_{12}dudv+g_{22}dv^2,\\
g_{11}=&\frac{L}{\left|LN-M^2\right|^{\frac{1}{4}}},\quad g_{12}=\frac{M}{\left|LN-M^2\right|^{\frac{1}{4}}},\quad g_{22}=\frac{N}{\left|LN-M^2\right|^{\frac{1}{4}}}.
\endaligned 
\end{equation}

The conormal vector $\nu$ to $S$ at $p$ is defined by $\nu=\left|K_e\right|^{-\frac 14}N_e$ and the affine normal vector $\xi$ is totally determined by the relations
\begin{equation}\label{Conormal_normal}
\left\langle \nu,\xi\right\rangle=1\quad\text{and}\quad\left\langle \nu,\xi_u\right\rangle=\left\langle \nu,\xi_v\right\rangle=0,
\end{equation}
where $\left\langle \_,\_\right\rangle$ denotes the Euclidean inner product (see \cite{Calabi1982}).
One can verify that $\nu$ is given by
\begin{equation}\label{Conormal}
\nu(p)=\frac{1}{\left|LN-M^2\right|^{\frac{1}{4}}} \left(  X_u\times X_v \right).
\end{equation}

From equation \eqref{Conormal_normal} and \eqref{Conormal}  it follows that

 \begin{equation}\label{xiafim} \xi= \frac 1{\left|LN-M^2\right|^{\frac{1}{4}}} (\nu_u \times\nu_v). \end{equation}
 
 For  further reference, see \cite{Buchin1983},  we have in a local chart $(u,v)$ that:
 
 \begin{equation}\label{eq:xicarta}
 \xi=\frac 12 \frac{|LN-M^2|^{\frac 14}}{\sqrt{|LN-M^2|}} \left\{ \frac{ \partial}{\partial u} \left(\frac{ NX_u-MX_v }{\sqrt{|LN-M^2|}} \right) + \frac{ \partial}{\partial v}\left(\frac{ L X_v-M X_u}{\sqrt{|LN-M^2|}} \right)
 \right\}
 \end{equation}
The conormal and the affine normal vectors satisfy the equations
\begin{equation*}
\left|X_u,X_v,\xi\right|=\left|\nu,\nu_u,\nu_v\right|=\left|LN-M^2\right|^{\frac{1}{4}}.
\end{equation*}
The affine structural equations are given by
\begin{eqnarray*}
X_{uu} & = & A^{1}_{11}X_u+A^{2}_{11}X_v+g_{11}\xi, \\
X_{uv} & = & X_{vu}=A^{1}_{12}X_u+A^{2}_{12}X_v+g_{12}\xi, \\
X_{vv} & = & A^{1}_{22}X_u+A^{2}_{22}X_v+g_{22}\xi,
\end{eqnarray*}
where the $A^{i}_{jk}$ are the affine Christoffel symbols.\\

Denote by $(b_{ij})$ the matrix of the affine shape operator in the basis $\{X_u,X_v\}$. We can write
\begin{equation}\label{Der_norm_afin}
\left[
\begin{array}{c}
\xi_u \\
\xi_v 
\end{array}
\right]=
\left[
\begin{array}{cc}
b_{11} & b_{21} \\
b_{12} & b_{22}
\end{array}
\right]
\left[
\begin{array}{c}
X_u \\
X_v 
\end{array}
\right],
\end{equation}
with
\begin{eqnarray*}
b_{11} & = & \left|LN-M^2\right|^{-\frac{1}{4}}\left|\xi_u,X_v,\xi\right|, \\
b_{21} & = & \left|LN-M^2\right|^{-\frac{1}{4}}\left|X_u,\xi_u,\xi\right|, \\
b_{12} & = & \left|LN-M^2\right|^{-\frac{1}{4}}\left|\xi_v,X_v,\xi\right|, \\
b_{22} & = & \left|LN-M^2\right|^{-\frac{1}{4}}\left|X_u,\xi_v,\xi\right|.
\end{eqnarray*}
The affine third fundamental form ($III_a$) is a symmetric bilinear quadratic form, given by
\begin{equation}\label{eq:xico}
\left\langle d\xi,d\nu\right\rangle=ldu^2+2mdudv+ndv^2, where
\end{equation}
\begin{eqnarray*}
l =\left\langle \nu_u,\xi_u\right\rangle=-\left\langle \nu,\xi_{uu}\right\rangle, \\
m =\left\langle \nu_u,\xi_v\right\rangle=\left\langle \nu_v,\xi_u\right\rangle=-\left\langle \nu,\xi_{uv}\right\rangle, \\
n =\left\langle \nu_v,\xi_v\right\rangle=-\left\langle \nu,\xi_{vv}\right\rangle. 
\end{eqnarray*}
In terms of the parametrization $X$, and using \eqref{Der_norm_afin} we obtain
\begin{eqnarray*}
-l & = & b_{11}g_{11}+b_{21}g_{12}, \\
-m & = & b_{11}g_{12}+b_{21}g_{22}, \\
-m & = & b_{12}g_{11}+b_{22}g_{22}, \\
-n & = & b_{12}g_{12}+b_{22}g_{22}.
\end{eqnarray*}
Thus we obtain the   coefficients  $b_{ij}$ in terms of the  affine first and third fundamental forms as follows.
\begin{eqnarray}\label{eq_bijI_III}
\left[
\begin{array}{cc}
b_{11} & b_{21} \\
b_{12} & b_{22}
\end{array}
\right] & = & -\left|g_{11}g_{22}-g_{12}^2\right|^{-1}
\left[
\begin{array}{cc}
l & m \\
m & n
\end{array}
\right]
\left[
\begin{array}{cc}
g_{22}  & -g_{12} \\
-g_{12} & g_{11}
\end{array}
\right] \\
 & = & -\left|LN-M^2\right|^{-\frac{3}{4}}
\left[
\begin{array}{cc}
l & m \\
m & n
\end{array}
\right]
\left[
\begin{array}{cc}
N  & -M \\
-M & L
\end{array}
\right]. \nonumber
\end{eqnarray}

We give now the equation of affine curvature lines.
\begin{proposition}
Let $\gamma(t)=X(u(t),v(t))$ a smooth curve on $S$. Then, $\gamma$ is an affine curvature line if, and only if, $\gamma$ satisfies the binary differential equation
\begin{equation}\label{eq_curv_lines1}
\left(lg_{12}-mg_{11}\right)du^2+\left(lg_{22}-ng_{11}\right)dudv+\left(mg_{22}-ng_{12}\right)dv^2=0
\end{equation}
\end{proposition}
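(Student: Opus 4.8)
The plan is to identify the affine curvature lines with the integral curves of the eigendirections of the affine shape operator $B$, to write the eigenvector condition as the vanishing of a $2\times 2$ determinant, and then to substitute the coefficients $b_{ij}$ from \eqref{eq_bijI_III} to reach \eqref{eq_curv_lines1}.

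First I would record the algebraic condition for a direction to be affine principal. Writing $\gamma'(t)=u'\,X_u+v'\,X_v$ and setting $(du,dv)=(u',v')$, the curve $\gamma$ is an affine curvature line exactly when $\gamma'$ is an eigenvector of $B$, that is, when $B(\gamma')$ is proportional to $\gamma'$. By the definition \eqref{eq:xi} together with \eqref{Der_norm_afin} one has $B(X_u)=\xi_u=b_{11}X_u+b_{21}X_v$ and $B(X_v)=\xi_v=b_{12}X_u+b_{22}X_v$, so $B(\gamma')$ has coordinates $(b_{11}du+b_{12}dv,\; b_{21}du+b_{22}dv)$ in the basis $\{X_u,X_v\}$. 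Proportionality of $\gamma'$ and $B(\gamma')$ is the vanishing of the determinant of the matrix whose columns are their coordinate vectors, which gives
\[
b_{21}\,du^2+(b_{22}-b_{11})\,du\,dv-b_{12}\,dv^2=0.
\]
This is the equation of the affine curvature lines expressed through the shape operator alone.

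Next I would eliminate the $b_{ij}$ in favor of the fundamental forms. Reading off the entries of the matrix product in \eqref{eq_bijI_III} gives $b_{11}=-D^{-1}(lg_{22}-mg_{12})$, $b_{21}=-D^{-1}(mg_{11}-lg_{12})$, $b_{12}=-D^{-1}(mg_{22}-ng_{12})$ and $b_{22}=-D^{-1}(ng_{11}-mg_{12})$, where $D=|g_{11}g_{22}-g_{12}^2|>0$ away from the parabolic set is a common factor. Substituting these, the overall factor $-D^{-1}$ cancels throughout; in the coefficient of $du\,dv$ the two $mg_{12}$ terms cancel, leaving $lg_{22}-ng_{11}$, while the $du^2$ and $dv^2$ coefficients become $lg_{12}-mg_{11}$ and $mg_{22}-ng_{12}$. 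The surviving equation is precisely \eqref{eq_curv_lines1}.

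I expect the only point requiring care to be the transposed indexing in \eqref{Der_norm_afin}, where $b_{12}$ and $b_{21}$ occupy the off-diagonal positions opposite to the usual convention for the matrix of a linear operator acting on coordinate columns; matching these to the correct rows of the product in \eqref{eq_bijI_III}, together with tracking the single global sign, is what must be done attentively. Beyond this there is no analytic content: once the eigenvector determinant is written down, the statement follows by a direct substitution and cancellation.
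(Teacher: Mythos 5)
Your proof is correct: the eigenvector condition $B(\gamma')\parallel\gamma'$ written as a vanishing $2\times2$ determinant gives $b_{21}\,du^2+(b_{22}-b_{11})\,du\,dv-b_{12}\,dv^2=0$, and substituting the entries of \eqref{eq_bijI_III} (with the transposed indexing of \eqref{Der_norm_afin} handled as you note) yields \eqref{eq_curv_lines1} up to the overall nonzero factor $D^{-1}$. The paper reaches the same equation by a different, shorter computation: it forms the Jacobian of the pair of quadratic forms $I_a$ and $III_a$ with respect to $(du,dv)$, i.e.\ the determinant $\bigl(\partial I_a/\partial(du)\bigr)\bigl(\partial III_a/\partial(dv)\bigr)-\bigl(\partial I_a/\partial(dv)\bigr)\bigl(\partial III_a/\partial(du)\bigr)=0$, which expands directly to $(mg_{11}-lg_{12})du^2+(ng_{11}-lg_{22})du\,dv+(ng_{12}-mg_{22})dv^2=0$. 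That shortcut buys brevity but tacitly uses that the eigendirections of $B$ coincide with the directions simultaneously diagonalizing $I_a$ and $III_a$, which rests on $B$ being self-adjoint with respect to $\mathcal{G}$ (equivalently $III_a(v,w)=-I_a(Bv,w)$); your route never needs self-adjointness, only the explicit matrix of $B$ in the basis $\{X_u,X_v\}$ and the formula \eqref{eq_bijI_III}, so it is more elementary and self-contained at the cost of a longer substitution. Both arguments produce the binary equation only up to a nonvanishing multiple, which is all that matters for its integral curves.
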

\begin{proof} The affine principal directions $v$ are defined by the eigenvectors equation $d\xi(p)v+\lambda(p)v=0$. These directions are obtained taking the jacobian of the  affine first fundamental forms $I_a=g_{11}du^2+2g_{12}dudv+g_{22}dv^2 $ and the affine third fundamental form $III_a=ldu^2+2mdudv+ndv^2 $ with respect to $(du,dv).$ \qed
\end{proof}
\begin{remark}\label{rem_lin-cur}
As the coefficients of the affine first fundamental form  $I_a$ are proportional to $K_e^{-\frac 14}$  we can regularize this differential equation at the points where $K_e=0$ (parabolic points) and  the affine curvature lines are the integral curves of the   regularized equation
\begin{equation}\label{eq_curv_lines2}
\left(lM-mL\right)du^2+\left(lN-nL\right)dudv+\left(mN-nM\right)dv^2=0.
\end{equation}
\end{remark}

%%%%%%%%%%%%%%%%%%%%%%%%%%%%%%%%%%%%%%%%%%%%%%%%%%%%%%%%%%%%%%%%%%%%%%%%%%%
%%%%  Affine curvature lines near at affine umbilic points
%%%%%%%%%%%%%%%%%%%%%%%%%%%%%%%%%%%%%%%%%%%%%%%%%%%%%%%%%%%%%%%%%%%%%%%%%%%
%
\section{Affine curvature lines near  affine umbilic points}\label{section3}
In this section we analyze  the local behavior of affine curvature lines in a neighborhood of  the affine umbilic points, elliptic and hyperbolic.\\
To establish the main result of this section, we will use a special and suitable local parametrization of surface $S$ in a small neighborhood of 
$p$. 
%The case where $p$ is a parabolic point will be consider in the Section \ref{section5}.
%
\begin{proposition}[Pick normal forms, \cite{Buchin1983,Davis2008}]\label{Pick_norm_forms}
Let $S$ be a smooth surface locally parametrized  by $X(u,v)=(u,v,h(u,v))$ and let $p=X(0,0)$. 
%
%\begin{eqnarray}\label{param_mong}
%X(u,v) & = & \left(u,v,\frac{1}{2}\left(q_{20}u^2+2q_{11}uv+q_{02}v^2\right)+\frac{1}{6}\left(q_{30}u^3+3q_{21}u^2v+\right. \right. \nonumber\\
      % & + & \left.3q_{12}uv^2+q_{03}v^3\right)+\frac{1}{24}\left(q_{40}u^4+4q_{31}u^3v+6q_{22}u^2v^2+\right. \\
			% & + & \left.4q_{13}uv^3+q_{04}v^4\right)+\frac{1}{120}\left(q_{50}u^5+5q_{41}u^4v+10q_{32}u^3v^2+\right.\nonumber\\
			% & + & \left.10q_{23}u^2v^3+5q_{14}uv^4+q_{05}v^5\right)+O(6) \nonumber
%\end{eqnarray}
Assume that $p=X(0,0)$ is not a parabolic point. Then, by affine change of coordinates,
\begin{enumerate}
	\item[(i)] If $p$ is an elliptic point, $h(u,v)$ can be written as
{\small 
$$
\frac{1}{2}\left(u^2+v^2\right)+\frac{\sigma}{6}\left(u^3-3uv^2\right)+\frac{1}{24}\left({q}_{40}u^4+4{q}_{31}u^3v+
       + 6{q}_{22}u^2v^2+4{q}_{13}uv^3+{q}_{04}v^4\right)
 $$
 $$
       +\frac{1}{120}\left({q}_{50}u^5+5{q}_{41}u^4v+  10{q}_{32}u^3v^2+10{q}_{23}u^2v^3+5{q}_{14}uv^4+{q}_{05}v^5\right)+O(6).
$$
 \item[(ii)] If $p$ is a hyperbolic point, $h(u,v)$ can be written as
$$
\frac{1}{2}\left(u^2-v^2\right)+\frac{\sigma}{6}\left(u^3+3uv^2\right)+\frac{1}{24}\left({q}_{40}u^4+4{q}_{31}u^3v+
       +  6{q}_{22}u^2v^2+4{q}_{13}uv^3+{q}_{04}v^4\right)
 $$
 $$
 +\frac{1}{120}\left({q}_{50}u^5+5{q}_{41}u^4v  
			 +  10{q}_{32}u^3v^2+10{q}_{23}u^2v^3+5{q}_{14}uv^4+{q}_{05}v^5\right)+O(6).
$$
}
\end{enumerate}
%
 %\textcolor{blue}{\Huge  INcluir: that of affine shape operator. Pra ser util tem que escrever ate o segundo jato. Nao esta ok o affine normal abaixo. Veja os denomininadores}\\
%\textcolor{red}{\Huge   Conferido}
%
\end{proposition}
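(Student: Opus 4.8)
The plan is to realize the normal form by using the equiaffine group $SL(3,\mathbb{R})\ltimes\mathbb{R}^3$ (which preserves the Blaschke structure, so that $\xi$ transforms covariantly) to kill, stage by stage, the non-invariant Taylor coefficients of $h$, tracking at each stage the residual symmetry that survives. First I would translate $p$ to the origin and apply a linear map so that $T_pS=\{z=0\}$ and the $z$-axis points along the affine normal $\xi(p)$; since $\xi(p)$ is transversal to $T_pS$, the surface stays a graph $X(u,v)=(u,v,h(u,v))$ with $h(0,0)=0$ and $\nabla h(0,0)=0$. As $p$ is not parabolic, $LN-M^2\neq0$, so the Hessian $(h_{ij})$ of $h$ at the origin is nondegenerate, and its signature $(+,+)$ or $(+,-)$ is exactly the elliptic/hyperbolic dichotomy.

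Next I would normalize the quadratic part. Writing $h=\tfrac12 h_{ij}u^iu^j+\tfrac16 h_{ijk}u^iu^ju^k+\cdots$, I would act by the subgroup of $SL(3,\mathbb{R})$ fixing the origin and preserving both $\{z=0\}$ and the $z$-axis; this acts as $GL(2,\mathbb{R})$ on $(u,v)$ together with the compensating scaling $z\mapsto(\det)^{-1}z$ that keeps the map unimodular and the affine normal vertical. Using it I would bring $(h_{ij})$ to $\operatorname{diag}(1,1)$ in the elliptic case and to $\operatorname{diag}(1,-1)$ in the hyperbolic case, producing the quadratic terms $\tfrac12(u^2+v^2)$ and $\tfrac12(u^2-v^2)$; the metric $\mathcal{G}$ of \eqref{eq:metricaG} then agrees with this quadratic form at the origin. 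After this the residual symmetry is $O(2)$ (combined with $z\mapsto\pm z$) in the elliptic case and $O(1,1)$ in the hyperbolic case.

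The key structural step is to show that, in coordinates where the $z$-axis is the affine normal, the cubic term is apolar with respect to $(h_{ij})$, i.e. $h^{ij}h_{ijk}=0$ for $k=1,2$. I would obtain this by inserting the normalized expansion of $h$ into \eqref{eq:xicarta} and extracting the horizontal components of $\xi$ at the origin: verticality of $\xi(0,0)$ forces precisely these two linear relations among the cubic coefficients. Apolarity then reduces the cubic to the harmonic form $q_{30}(u^3-3uv^2)+q_{21}(3u^2v-v^3)$ in the elliptic case and $q_{30}(u^3+3uv^2)+q_{21}(3u^2v+v^3)$ in the hyperbolic case. Finally I would spend the residual rotation group to eliminate the last cubic coefficient: since $SO(2)$ sends $(u+iv)^3$ to $e^{3i\theta}(u+iv)^3$, a suitable rotation makes the elliptic cubic a nonnegative multiple of $u^3-3uv^2$, giving $\tfrac{\sigma}{6}(u^3-3uv^2)$; the analogous computation with the split-complex number $w=u+jv$, $j^2=1$, handles the hyperbolic case and yields $\tfrac{\sigma}{6}(u^3+3uv^2)$. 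The quartic and quintic coefficients $q_{ij}$ carry the higher affine invariants and are left unconstrained.

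The main obstacle I anticipate is the third step: deriving apolarity from verticality of the affine normal. This requires expanding $\xi$ from \eqref{eq:xicarta} to first order in $(u,v)$, which means differentiating the factors $|LN-M^2|^{1/4}$ and $\sqrt{|LN-M^2|}$ and expanding the cross product $\nu_u\times\nu_v$ through the cubic terms of $h$. The delicate point is to organize this so that the tangential component of $\xi(0,0)$ is manifestly proportional to the apolar trace $h^{ij}h_{ijk}$; once this is isolated, verticality gives the two apolarity relations and everything afterwards is linear algebra in the stabilizer. A secondary point of care is that $SO(1,1)$ is noncompact, so reducing the hyperbolic cubic needs a short case analysis via the split-complex modulus (using the swap $u\leftrightarrow v$ together with $z\mapsto-z$ when necessary) to confirm one can always reach $\tfrac{\sigma}{6}(u^3+3uv^2)$ with $\sigma\geq0$.
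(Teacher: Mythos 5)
The paper does not prove this proposition at all: it is imported as a known result with a citation to Su Buchin's book and Davis's thesis, so your argument cannot be compared against an in-paper proof. On its own terms, your route is the standard derivation of the Pick normal form and its skeleton is sound: aligning the $z$-axis with $\xi(p)$, normalizing the Hessian, and then observing that verticality of $\xi(0,0)$ is exactly the apolarity relation $h^{ij}h_{ijk}=0$. That last equivalence checks out against the paper's own explicit formulas for $\xi_1,\xi_2$ in Section 5: at the origin with $h_{uu}=1$, $h_{vv}=\varepsilon=\pm1$, $h_{uv}=0$ one gets $\xi_1\propto h_{uuu}+\varepsilon h_{uvv}$ and $\xi_2\propto h_{vvv}+\varepsilon h_{uuv}$, which are (up to the factor $\varepsilon$) the two traces $h^{ij}h_{ijk}$. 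Note you only need the value $\xi(0,0)$, not its first-order expansion, so the computation you flag as the main obstacle is lighter than you fear. The reduction of the harmonic cubic by the residual $SO(2)$ in the elliptic case is complete as you describe it.

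The genuine gap is in the hyperbolic case, precisely where you promise that a "short case analysis" will "confirm one can always reach" $\tfrac{\sigma}{6}(u^3+3uv^2)$: this confirmation fails on the null orbit. The space of apolar cubics for $\tfrac12(u^2-v^2)$ is spanned by $a(u^3+3uv^2)+b(3u^2v+v^3)$, and the residual group (the four components of $O(1,1)$ together with $z\mapsto\pm z$) preserves the sign of $a^2-b^2$; equivalently, in asymptotic coordinates the cubic is $\tfrac16(q_{30}u^3+q_{03}v^3)$ and $q_{30}q_{03}$ is an invariant up to positive rescaling. The orbits with $a^2>b^2$ and $a^2<b^2$ do reach the stated form (the latter via the swap $u\leftrightarrow v$, $z\mapsto-z$ you mention), and the zero cubic gives $\sigma=0$; but a nonzero null cubic $a^2=b^2\neq0$ (exactly one of $q_{30},q_{03}$ vanishing) is a separate orbit that is never of the form $\sigma(u^3+3uv^2)$. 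So either you must add this degenerate subcase as an explicit exclusion (which is how the cited sources treat it, and is consistent with the genericity hypotheses the paper imposes later), or the final step of your argument is false as stated. A secondary, fixable slip: your recipe $z\mapsto(\det)^{-1}z$ does not by itself bring an arbitrary nondegenerate Hessian to $\operatorname{diag}(1,\pm1)$; you need the extra overall scaling $(u,v)\mapsto c(u,v)$ (or simply to use a general affine, not equiaffine, map, which the proposition permits and which still preserves the affine normal line).
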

\medskip

\begin{lemma}\label{lem:jetnormal}
	Let $X$ be given by Proposition \ref{Pick_norm_forms}. In the elliptic case, the affine normal $\xi$ is given by:
		%{ \small
	\begin{equation}\label{eq:jetxie}\aligned
	\xi(u,v)= & \left(\left(\frac 12 \,{\sigma}^{2}-\frac 14 q_{22}-\frac 14 q_{40}\right) u-\frac 14\left(q_{13}+q_{31}\right) v+O(2),\right. \\ &  \left.  
 -\frac 14\left(q_{13}+q_{31}\right)u+\left(\frac 12 \,{\sigma}^{2}-\frac 14 q_{04}- \frac 14 q_{22}\right)v+O(2), 1+O(2)	\right).
	\endaligned
	\end{equation}
%}
	In the hyperbolic case, the affine normal is given by:
	%{ \small
	\begin{equation}\label{eq:jetxih}
	\aligned
	\xi(u,v)= & \left(\left(\frac 12 \,{\sigma}^{2}+\frac 14 q_{22}-\frac 14 q_{40}\right)u+\frac 14(q_{13}-q_{31})v+O(2),\right. \\ &  \left.  
-\frac 14(q_{13}-q_{31})u+\left(\frac 12 \,{\sigma}^{2}-\frac 14 q_{04}+\frac 14 q_{22}\right)v+O(2), 1+O(2)\right).
\endaligned
	\end{equation}
%}
Here $O(2)$ means functions of higher order.
\end{lemma}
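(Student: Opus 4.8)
The plan is to compute $\xi$ directly from formula \eqref{xiafim}, substitute the Pick normal form of Proposition \ref{Pick_norm_forms}, and Taylor expand, retaining only the constant and first order terms in $(u,v)$. A simple order count shows that the quintic and higher terms of $h$ affect $\xi$ only at order $O(2)$, so only the $4$-jet of $h$ is needed.

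First I would record the data of the graph $X(u,v)=(u,v,h(u,v))$: one has $X_u=(1,0,h_u)$, $X_v=(0,1,h_v)$, $X_u\times X_v=(-h_u,-h_v,1)=:w$, and $L=h_{uu}$, $M=h_{uv}$, $N=h_{vv}$, so that $LN-M^2=H$ is the Hessian determinant of $h$, with $H(0,0)=1$ in the elliptic case and $H(0,0)=-1$ in the hyperbolic case. Writing $\phi=|H|^{-1/4}$, the conormal \eqref{Conormal} is $\nu=\phi\,w$. Inserting $\nu_u=\phi_u w+\phi w_u$ and $\nu_v=\phi_v w+\phi w_v$ into \eqref{xiafim} and expanding the cross product (using $w\times w=0$ and $w_u\times w=-w\times w_u$) gives the decomposition
\begin{equation*}
\xi=\operatorname{sgn}(H)\left[\phi^{3}\,(w_u\times w_v)+\phi^{2}\phi_u\,(w\times w_v)-\phi^{2}\phi_v\,(w\times w_u)\right].
\end{equation*}
The factor $\operatorname{sgn}(H)$ is forced by the normalization $\langle\nu,\xi\rangle=1$ of \eqref{Conormal_normal}: the raw output of \eqref{xiafim} has third component $\operatorname{sgn}(H)|H|^{1/4}$, which is $-1$ at the origin in the hyperbolic case, so the sign must be corrected to make the third component equal $1$ at the origin in both cases. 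This is the one delicate point where the elliptic and hyperbolic computations diverge.

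Next I would expand the three vectors. A direct calculation gives $w_u\times w_v=(0,0,H)$, $w\times w_v=(h_{vv},-h_{uv},\ast)$ and $w\times w_u=(h_{uv},-h_{uu},\ast)$, so the first term contributes only to the third component, namely $\operatorname{sgn}(H)\,\phi^{3}H=|H|^{1/4}$, which is $1+O(2)$ once one checks that the linear part of $H$ vanishes, i.e. $H_u(0,0)=H_v(0,0)=0$ (a cancellation of the cubic $\sigma$ terms). The first two components of $\xi$ come solely from the last two terms; since $\phi^{2}\phi_u=-\tfrac14 H^{-7/4}H_u$ in the elliptic case, which vanishes at the origin together with $H_u$ (and likewise $\phi^{2}\phi_v$ with $H_v$), only the linear part of $H_u,H_v$ survives, giving
\begin{equation*}
[\xi_1]_{\mathrm{lin}}=-\tfrac14\,[H_u]_{\mathrm{lin}},\qquad [\xi_2]_{\mathrm{lin}}=-\tfrac14\,[H_v]_{\mathrm{lin}}
\end{equation*}
in the elliptic case, with the opposite overall sign in the hyperbolic case (arising jointly from $\operatorname{sgn}(H)=-1$ and $h_{vv}(0,0)=-1$).

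Finally I would compute the $2$-jet of $H=h_{uu}h_{vv}-h_{uv}^{2}$ from the $4$-jet of $h$. The $\sigma^{2}$ contribution comes from the product of the cubic parts of $h_{uu},h_{vv}$ together with $h_{uv}^{2}$, while the $q_{ij}$ contributions come from the quartic part of $h$; differentiating and substituting into the displayed formulas for $[\xi_1]_{\mathrm{lin}},[\xi_2]_{\mathrm{lin}}$ yields exactly \eqref{eq:jetxie}, and the parallel computation starting from $h(u,v)=\tfrac12(u^2-v^2)+\tfrac{\sigma}{6}(u^3+3uv^2)+\cdots$ yields \eqref{eq:jetxih}. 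The only genuine work, beyond routine Taylor bookkeeping, is keeping the fractional powers $|H|^{\pm1/4}$ and the sign $\operatorname{sgn}(H)$ consistent across the two cases; these are precisely what turn $-\tfrac14q_{22}$ into $+\tfrac14q_{22}$ and $q_{13}+q_{31}$ into $q_{13}-q_{31}$ when passing from the elliptic to the hyperbolic normal form.
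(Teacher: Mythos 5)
Your overall strategy is exactly the paper's (the paper's proof is just ``follows from equation \eqref{xiafim} and straightforward calculations''), and most of the details are right: the identification $L=h_{uu}$, $M=h_{uv}$, $N=h_{vv}$, the decomposition of $\nu_u\times\nu_v$, the observation that $H_u(0,0)=H_v(0,0)=0$ (so that only the linear parts of $H_u,H_v$ survive), and the insertion of $\operatorname{sgn}(H)$ to enforce $\langle\nu,\xi\rangle=1$ in the hyperbolic case are all correct and do reproduce \eqref{eq:jetxie} in the elliptic case.

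There is, however, a concrete sign error in your treatment of the hyperbolic case. You assert that \emph{both} displayed formulas acquire ``the opposite overall sign,'' i.e.\ that $[\xi_1]_{\mathrm{lin}}=+\tfrac14[H_u]_{\mathrm{lin}}$ \emph{and} $[\xi_2]_{\mathrm{lin}}=+\tfrac14[H_v]_{\mathrm{lin}}$ when $H<0$. Tracking the three sign sources separately shows this is wrong for $\xi_2$. One has
\begin{equation*}
[\xi_1]_{\mathrm{lin}}=\operatorname{sgn}(H)\,h_{vv}(0,0)\,[\phi^2\phi_u]_{\mathrm{lin}},\qquad
[\xi_2]_{\mathrm{lin}}=\operatorname{sgn}(H)\,h_{uu}(0,0)\,[\phi^2\phi_v]_{\mathrm{lin}},
\end{equation*}
and in the hyperbolic case $\phi^2\phi_u=+\tfrac14(-H)^{-7/4}H_u$ (not $-\tfrac14$), so the three factors for $\xi_1$ are $(-1)\cdot(-1)\cdot(+\tfrac14)=+\tfrac14$, while for $\xi_2$ they are $(-1)\cdot(+1)\cdot(+\tfrac14)=-\tfrac14$, since $h_{uu}(0,0)=+1$ in both normal forms. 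Thus only the first component flips sign; the second keeps $[\xi_2]_{\mathrm{lin}}=-\tfrac14[H_v]_{\mathrm{lin}}$, and with $[H_v]_{\mathrm{lin}}=(q_{13}-q_{31})u+(-2\sigma^2+q_{04}-q_{22})v$ this is exactly what \eqref{eq:jetxih} asserts. Your rule as stated would produce the negative of the second component. Relatedly, your parenthetical attribution of the flip to ``$\operatorname{sgn}(H)=-1$ and $h_{vv}(0,0)=-1$'' is misleading: those two factors cancel each other, and the actual source of the sign change is the derivative of $|H|^{-1/4}=(-H)^{-1/4}$. The fix is local and does not affect the rest of the argument, but as written the hyperbolic computation does not land on \eqref{eq:jetxih}.
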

\begin{proof} 
Follows from equation \eqref{xiafim} and  straightforward calculations.	\qed
\end{proof}
%
%

% \textcolor{blue}{\Huge   Nao esta ok o  affine shape abaixo. Veja os denomininadores e tambem os coeficientes}\\
% \textcolor{red}{\Huge   Conferido}
\begin{lemma}\label{lema:shape}
Let $X$ be given by Proposition \ref{Pick_norm_forms}. In the elliptic case, the affine shape operator $B$ is given by:
\begin{eqnarray}
b_{11} & = & \frac 12 \sigma^2-\frac 14 q_{22}-\frac 14 q_{40}+\left(-\sigma^3-\frac 14 \sigma q_{22}+\frac 54 q_{40}\sigma- \frac 14 q_{32}-\frac 14 q_{50}\right)u+ \nonumber\\
       & + & \left(-\frac 14 q_{41}-\frac 12 \sigma q_{13}-\frac 14 q_{23}\right)v+O(2), \nonumber\\
b_{12} & = & -\frac 14 (q_{13}+q_{31})+\left(-\frac 14 q_{41}-\frac 12 \sigma q_{13}-\frac 14 q_{23}\right)u+\nonumber\\
       & + & \left(\sigma^3-\frac 54 \sigma q_{22}-\frac 14 q_{32}-\frac 34 \sigma q_{04}-\frac 14 q_{14}\right)v+O(2), \nonumber\\
b_{21} & = & -\frac 14 (q_{13}+q_{31})+\left(-\frac 12 q_{31}\sigma-\frac 14 q_{41}-\sigma q_{13}-\frac 14 q_{23}\right)u+ \nonumber\\
       & + & \left(\sigma^3-\frac 14 q_{40}\sigma-\frac 54 \sigma q_{22}-\frac 14 q_{32}-\frac 12 \sigma q_{04}-\frac 14 q_{14}\right)v+O(2), \nonumber\\
b_{22} & = & \frac 12 \sigma^2-\frac 14 q_{22}-\frac 14 q_{04}+\left(\sigma^3-\frac 14 q_{40}\sigma-\frac 54 \sigma q_{22}-\frac 14 q_{32}-\frac 12 \sigma q_{04}-\frac 14 q_{14}\right)u+ \nonumber\\
       &+& \left(-\frac 12 q_{31}\sigma-2\sigma q_{13}-\frac 14 q_{23}-\frac 14 q_{05}\right)v+O(2). \nonumber
\end{eqnarray}
In the hyperbolic case, the affine shape operator $B$ is given by:
\begin{eqnarray}
b_{11} & = & -\frac 12 \sigma^2-\frac 14 q_{22}+\frac 14 q_{40}-\left(-\sigma^3+\frac 14 \sigma q_{22}+\frac 54 q_{40}\sigma+\frac 14 q_{32}-\frac 14 q_{50}\right)u- \nonumber\\
       & - & \left(\frac 12 \sigma q_{13}+\frac 14 q_{23}-\frac 14 q_{41}\right)v+O(2),\nonumber\\
b_{12} & = & \frac 14 (q_{31}- q_{13})-\left(-\frac 14 q_{41}+\frac 12 \sigma q_{13}+\frac 14 q_{23}\right)u- \nonumber\\
       & - & \left(-\sigma^3-\frac 54 \sigma q_{22}-\frac 14 q_{32}+\frac 34 \sigma q_{04}+\frac 14 q_{14}\right)v+O(2), \nonumber\\
b_{21} & = & -\frac 14 (q_{31}- q_{13})-\left(\frac 12 q_{31}\sigma+\frac 14 q_{41}-\sigma q_{13}-\frac 14 q_{23}\right)u- \nonumber\\
       & - &  \left(\sigma^3-\frac 14 q_{40}\sigma+\frac 54 \sigma q_{22}+\frac 14 q_{32}-\frac 12 \sigma q_{04}-\frac 14 q_{14}\right)v+O(2), \nonumber\\
b_{22} & = & -\frac 12 \sigma^2-\frac 14 q_{22}+\frac 14 q_{04}-\left(\sigma^3-\frac 14 q_{40}\sigma+\frac 54 \sigma q_{22}+\frac 14 q_{32}-\frac 12 \sigma q_{04}-\frac 14 q_{14}\right)u- \nonumber\\
       & - & \left(-\frac 12 q_{31}\sigma+2\sigma q_{13}+\frac 14 q_{23}-\frac 14 q_{05}\right)v+O(2).\nonumber
\end{eqnarray}
\end{lemma}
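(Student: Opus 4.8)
The plan is to use the graph form $X(u,v)=(u,v,h(u,v))$ to reduce the four entries of the affine shape operator to partial derivatives of the components of $\xi$. Writing $\xi=(\xi^1,\xi^2,\xi^3)$ and using $X_u=(1,0,h_u)$, $X_v=(0,1,h_v)$, the first two coordinates of the structural equation \eqref{Der_norm_afin} give at once
\begin{equation*}
b_{11}=\xi^1_u,\qquad b_{21}=\xi^2_u,\qquad b_{12}=\xi^1_v,\qquad b_{22}=\xi^2_v,
\end{equation*}
the third coordinate being automatic since $d\xi\subset TS$. Hence proving the lemma amounts to computing the $1$-jets of $\xi_u$ and $\xi_v$, that is, the $2$-jet of the first two components of $\xi$.

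The content of the computation is therefore to carry the expansion of $\xi$ in Lemma \ref{lem:jetnormal} one order further. That lemma records only the linear part of $\xi$, and differentiating it reproduces exactly the constant terms $b_{ij}(0,0)$; the linear parts of the $b_{ij}$, where the fifth-order coefficients $q_{50},q_{41},q_{32},\dots$ appear, require the quadratic part of $\xi$. I would obtain it from \eqref{xiafim} together with the Pick normal forms of Proposition \ref{Pick_norm_forms}: compute $L=h_{uu}$, $M=h_{uv}$, $N=h_{vv}$ and the factor $|LN-M^2|^{-1/4}$ to the needed order, form the conormal $\nu$ from \eqref{Conormal}, and then $\xi=|LN-M^2|^{-1/4}(\nu_u\times\nu_v)$, keeping all terms through order two. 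Differentiating the resulting $2$-jet of $\xi^1,\xi^2$ in the elliptic and hyperbolic normal forms separately gives the stated expressions for $b_{11},b_{12},b_{21},b_{22}$.

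Two checks steer the algebra. First, the constant terms must coincide with the linear coefficients of $\xi$ already found in Lemma \ref{lem:jetnormal}, which they do. Second, since $B$ is self-adjoint with respect to $\mathcal{G}$, the entries must satisfy $b_{11}g_{12}+b_{21}g_{22}=b_{12}g_{11}+b_{22}g_{12}$; imposing this order by order detects sign errors, and it also explains why $b_{12}\neq b_{21}$ in general, namely that $\{X_u,X_v\}$ is not $\mathcal{G}$-orthonormal. The only genuine obstacle is the length of the symbolic expansion: propagating the fifth-order jet of $h$ through two differentiations, a cross product and the fractional power $|LN-M^2|^{-1/4}$ yields bulky intermediate expressions, so the expansion is best carried out and verified with a computer algebra system. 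Conceptually, nothing beyond the reduction above and a careful Taylor expansion is needed.
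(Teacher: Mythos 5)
Your method is sound and it is genuinely different from the one used in the paper. The paper's proof is a one-line appeal to formula \eqref{eq_bijI_III}: one computes the coefficients $l,m,n$ of $III_a$ and $L,M,N$, and reads off $(b_{ij})$ as $-|LN-M^2|^{-3/4}\,(III_a)\,\mathrm{adj}(II_e)$. You instead exploit the Monge chart: since $X_u=(1,0,h_u)$ and $X_v=(0,1,h_v)$, equation \eqref{Der_norm_afin} gives $b_{11}=\xi^1_u$, $b_{21}=\xi^2_u$, $b_{12}=\xi^1_v$, $b_{22}=\xi^2_v$ directly, so the whole lemma reduces to the $2$-jet of the first two components of $\xi$. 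Your route avoids computing $l,m,n$ and inverting the metric, and it makes the relation to Lemma \ref{lem:jetnormal} transparent; the paper's route reuses the general, chart-independent formula already set up in the preliminaries. Both ultimately require the same symbolic expansion of $\xi$ through second order, and your self-adjointness check $g_{11}b_{12}+g_{12}b_{22}=g_{12}b_{11}+g_{22}b_{21}$ is a legitimate safeguard.

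One caveat you should be aware of: your consistency check against Lemma \ref{lem:jetnormal} succeeds in the elliptic case but fails by an overall sign in the hyperbolic one. From \eqref{eq:jetxih} one gets $\xi^1_u(0,0)=\tfrac12\sigma^2+\tfrac14 q_{22}-\tfrac14 q_{40}$, whereas the lemma asserts $b_{11}(0,0)=-\tfrac12\sigma^2-\tfrac14 q_{22}+\tfrac14 q_{40}$, and likewise for the other entries. The source of the discrepancy is the absolute value in \eqref{eq_bijI_III}: when $LN-M^2<0$ one has $g_{11}g_{22}-g_{12}^2<0$, so replacing the determinant by its modulus flips the sign of the whole matrix relative to the definition $\xi_u=b_{11}X_u+b_{21}X_v$. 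Your computation, carried out faithfully from \eqref{xiafim} and \eqref{Der_norm_afin}, would therefore produce the negatives of the stated hyperbolic $b_{ij}$. This is an internal sign inconsistency of the paper rather than an error in your argument (and it is harmless for everything downstream, since umbilics, eigendirections and the curvature-line equation are insensitive to the overall sign of $B$), but you should either adopt the paper's convention explicitly or not expect the hyperbolic constant terms to match Lemma \ref{lem:jetnormal} on the nose.
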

\begin{proof} 
Follows from equation \eqref{eq_bijI_III} in the parametrization of $X$ given by equation \eqref{Pick_norm_forms}.
\qed	
\end{proof}
\medskip
\begin{lemma}\label{lem:edoacl}
Let $X$ be given by Proposition \ref{Pick_norm_forms}. In the elliptic case, $p=X(0,0)$ is an affine umbilic point if and only if $q_{31}=-q_{13}$ and $q_{40}=q_{04}$. In the hyperbolic case, the origin is an affine umbilic point if, and only if, $q_{40}=q_{04}$ and $q_{31}=q_{13}$.
\end{lemma}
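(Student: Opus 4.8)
The plan is to use directly the explicit expressions for the affine shape operator established in Lemma \ref{lema:shape}. Recall from Section \ref{sec:prelimi} that $p$ is affine umbilic precisely when $B(p)$ is a scalar multiple of the identity, and that being a multiple of the identity is a basis-independent property. Hence, in the basis $\{X_u,X_v\}$ the umbilic condition at $p=X(0,0)$ is equivalent to the three scalar equations
$$ b_{11}(0,0)=b_{22}(0,0),\qquad b_{12}(0,0)=0,\qquad b_{21}(0,0)=0. $$
The entire argument therefore reduces to reading off the constant (zeroth-order) terms of the $b_{ij}$ from Lemma \ref{lema:shape} and solving this system in the jet coefficients.

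For the elliptic case I would note that the constant terms are $b_{11}(0,0)=\frac12\sigma^2-\frac14 q_{22}-\frac14 q_{40}$, $b_{22}(0,0)=\frac12\sigma^2-\frac14 q_{22}-\frac14 q_{04}$ and $b_{12}(0,0)=b_{21}(0,0)=-\frac14(q_{13}+q_{31})$. The two off-diagonal equations then both reduce to $q_{13}+q_{31}=0$, that is $q_{31}=-q_{13}$, while $b_{11}(0,0)-b_{22}(0,0)=\frac14(q_{04}-q_{40})$ forces $q_{40}=q_{04}$. For the hyperbolic case I would proceed identically, now with $b_{12}(0,0)=\frac14(q_{31}-q_{13})$ and $b_{21}(0,0)=-\frac14(q_{31}-q_{13})$, so that both off-diagonal entries vanish exactly when $q_{31}=q_{13}$, and $b_{11}(0,0)-b_{22}(0,0)=\frac14(q_{40}-q_{04})$ again gives $q_{40}=q_{04}$. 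This produces the stated characterization in each case.

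The only point that requires care is that the matrix $(b_{ij})$ is \emph{not} symmetric: since $B$ is self-adjoint with respect to the affine metric $\mathcal{G}$ and not the Euclidean one, the entries $b_{12}$ and $b_{21}$ genuinely differ (their linear parts in Lemma \ref{lema:shape} are distinct). One must therefore impose $b_{12}(0,0)=0$ and $b_{21}(0,0)=0$ as two separate conditions and check that they are compatible; they are, because in both normal forms the constant terms of $b_{12}$ and $b_{21}$ agree up to sign and hence share the same zero locus. Beyond this bookkeeping there is no real obstacle, the heavy computation having already been absorbed into Lemma \ref{lema:shape}.
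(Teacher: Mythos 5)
Your proposal is correct and is exactly the paper's argument: the paper's proof of this lemma is simply ``It follows from Lemma \ref{lema:shape}'', and you have filled in the routine step of reading off the constant terms of the $b_{ij}$ and imposing $b_{11}=b_{22}$, $b_{12}=b_{21}=0$ at the origin. Your remark about the non-symmetry of $(b_{ij})$ is a reasonable precaution, though at the origin of these normal forms the constant terms of $b_{12}$ and $b_{21}$ coincide (elliptic) or differ only by sign (hyperbolic), so the two off-diagonal conditions collapse to one, as you observe.
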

\begin{proof}
It follows from Lemma \ref{lema:shape}. \qed
%show that, in the elliptic case, 
%\begin{eqnarray*}
%\left(lM-mL\right)(0,0) & = & -\frac{1}{4}\left(q_{31}+q_{13}\right), \\
%\left(lN-nL\right)(0,0) & = & \frac{1}{4}\left(q_{40}-q_{04}\right), \\
%\left(mN-nM\right)(0,0) & = & \frac{1}{4}\left(q_{31}+q_{13}\right).
%\end{eqnarray*}
\end{proof}
\medskip
\begin{lemma}\label{lem:difeqACL}
In a neighborhood of an elliptic umbilic point the binary differential equation of the affine curvature is given by:
\begin{equation}
(a_1u+ b_1v)(du^2-dv^2)+2(a_2u+b_2v) dudv+O(2)=0, 
\end{equation}
where
\begin{equation}\label{eq:darboux}\aligned
lM-mL & = & a_1u+ b_1v +O(2)\\
lN-nL & = & 2a_2u+ 
2b_2v+O(2)\\
mN-nM & = & -a_1u- b_1v +O(2)\endaligned
\end{equation}
and 
\begin{eqnarray*}
a_1   & = & \frac{1}{4}\left(2\sigma q_{31}-q_{23}-q_{41}\right) \\
b_1   & = & \sigma^3-\frac{1}{4}\sigma\left(3q_{40}+5q_{22}\right)-\frac{1}{4}\left(q_{14}+q_{32}\right), 
\end{eqnarray*}

\begin{eqnarray*}
a_2  & = &  \sigma^3-\frac{\sigma}{2}\left(2q_{40}+q_{22}\right)+\frac{1}{8}\left(q_{50}-q_{14}\right) \\
b_2  & = &  \frac{\sigma}{2} q_{31}+\frac{1}{8}\left(q_{41}-q_{05}\right). 
\end{eqnarray*}
Here $O(2)$ means higher order terms in relation to the variables $(u,v).$

\end{lemma}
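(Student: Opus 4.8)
The plan is to avoid computing the affine third fundamental form coefficients $l,m,n$ and the Berwald-Blaschke coefficients directly, and instead to read off the three coefficients of the regularized binary differential equation \eqref{eq_curv_lines2} from the jet of the affine shape operator already recorded in Lemma \ref{lema:shape}. Comparing individual entries of the matrix product in \eqref{eq_bijI_III} yields the exact identities
\begin{equation*}
lM-mL=|LN-M^2|^{\frac 34}b_{21},\quad lN-nL=|LN-M^2|^{\frac 34}(b_{22}-b_{11}),\quad mN-nM=-|LN-M^2|^{\frac 34}b_{12},
\end{equation*}
so that, up to the common positive factor $|LN-M^2|^{3/4}$, the coefficients of \eqref{eq_curv_lines2} are exactly $b_{21}$, $b_{22}-b_{11}$ and $-b_{12}$.

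Next I would use the elliptic normal form $X(u,v)=(u,v,h(u,v))$ of Proposition \ref{Pick_norm_forms}, for which $L=h_{uu}$, $M=h_{uv}$, $N=h_{vv}$, and hence $L(0,0)=N(0,0)=1$, $M(0,0)=0$; consequently $|LN-M^2|^{3/4}=1+O(1)$. By Lemma \ref{lem:edoacl} an elliptic umbilic point is characterised by $q_{31}=-q_{13}$ and $q_{40}=q_{04}$, and substituting these relations into Lemma \ref{lema:shape} makes the constant terms of $b_{21}$, $b_{12}$ and $b_{22}-b_{11}$ vanish. Each of these entries therefore starts at first order, so multiplying by $1+O(1)$ does not alter its linear part: the linear terms of $lM-mL$, $lN-nL$ and $mN-nM$ coincide respectively with those of $b_{21}$, $b_{22}-b_{11}$ and $-b_{12}$.

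Finally I would extract these linear parts from Lemma \ref{lema:shape} under the umbilic relations and simplify. The coefficients of $u$ and $v$ in $b_{21}$ collapse to $a_1$ and $b_1$; the same linear form is produced by $b_{12}$, which accounts for the relation $mN-nM=-(a_1u+b_1v)+O(2)$; and the coefficients of $u$ and $v$ in $b_{22}-b_{11}$ collapse to $2a_2$ and $2b_2$. Inserting these into \eqref{eq_curv_lines2} and collecting the $du^2$ and $dv^2$ terms produces the stated equation $(a_1u+b_1v)(du^2-dv^2)+2(a_2u+b_2v)\,du\,dv+O(2)=0$.

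The main obstacle is not conceptual but bookkeeping: one must keep the index and sign conventions in \eqref{eq_bijI_III} straight so that each coefficient of the quadratic form is paired with the correct shape operator entry, and then carry out the (elementary but lengthy) cancellations in the linear coefficients once $q_{31}=-q_{13}$ and $q_{40}=q_{04}$ are imposed. The vanishing of the constant terms is what makes the argument clean, since it guarantees that the factor $|LN-M^2|^{3/4}$ affects the coefficients only at order $O(2)$ and can be replaced by its value $1$ at the umbilic.
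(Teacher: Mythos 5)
Your proposal is correct and amounts to the same computation the paper performs (its proof is just ``straightforward calculations''): the identities $lM-mL=|LN-M^2|^{3/4}b_{21}$, $lN-nL=|LN-M^2|^{3/4}(b_{22}-b_{11})$, $mN-nM=-|LN-M^2|^{3/4}b_{12}$ follow directly from \eqref{eq_bijI_III}, and substituting $q_{31}=-q_{13}$, $q_{40}=q_{04}$ into the expansions of Lemma \ref{lema:shape} does reproduce $a_1,b_1,2a_2,2b_2$ exactly as you claim. Organizing the calculation through the already-tabulated shape operator entries is a clean and valid way to reuse the paper's intermediate data, and the factor $|LN-M^2|^{3/4}=1+O(1)$ indeed cannot affect the linear parts since all three entries vanish at the umbilic.
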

\begin{proof}
Straightforward calculations. \qed
\end{proof}
\medskip
\begin{lemma}
In a neighborhood of a hyperbolic umbilic point the binary differential equation of the affine curvature lines is given by:
\begin{equation}
(a_1u+ b_1v)(du^2+dv^2)+2(a_2u+b_2v) dudv+O(2)=0, 
\end{equation}
where
\begin{equation}\label{eq:darboux}\aligned
lM-mL & = & a_1u+ b_1v +O(2)\\
lN-nL & = & 2a_2u+ 
2b_2v+O(2)\\
mN-nM & = & a_1u +b_1v +O(2)\endaligned
\end{equation}
and 
\begin{eqnarray*}
a_1  & = & \frac{1}{4}\left(2\sigma q_{31}+q_{23}-q_{41}\right)\\
b_1  & = & -\sigma^3+\frac{1}{4}\sigma\left(3q_{40}-5q_{22}\right)+\frac{1}{4}\left(q_{14}-q_{32}\right),
\end{eqnarray*}
\begin{eqnarray*}
a_2   & = & -\sigma^3+\frac{\sigma}{2}\left(2q_{40}-q_{22}\right)-\frac{1}{8}\left(q_{50}-q_{14}\right)\\
b_2   & = & -\frac{\sigma}{2} q_{31}-\frac{1}{8}\left(q_{41}-q_{05}\right). 
\end{eqnarray*}

Here $O(2)$ means higher order terms in relation to the variables $(u,v).$
\end{lemma}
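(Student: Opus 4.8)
The plan is to mirror the elliptic-case computation of Lemma~\ref{lem:difeqACL} exactly, changing only the input data to the hyperbolic Pick normal form from Proposition~\ref{Pick_norm_forms}(ii). First I would take the hyperbolic expression for the affine normal $\xi$ from Lemma~\ref{lem:jetnormal}, equation~\eqref{eq:jetxih}, together with the hyperbolic coefficients $b_{ij}$ from Lemma~\ref{lema:shape}, and assemble the coefficients $l,m,n$ of the affine third fundamental form via the relations in \eqref{eq_bijI_III} (inverting the displayed linear system between $(b_{ij})$ and $(l,m,n)$). Since the metric in the hyperbolic case has $g_{11}=1/|LN-M^2|^{1/4}$, $g_{22}=-1/|LN-M^2|^{1/4}+O(\cdot)$ and $g_{12}=O(\cdot)$ at the origin—reflecting the indefinite leading form $\tfrac12(u^2-v^2)$—the sign pattern propagating into the coefficients $lM-mL$, $lN-nL$, $mN-nM$ is what flips the quadratic part of the binary differential equation from $du^2-dv^2$ (elliptic) to $du^2+dv^2$ (hyperbolic), and likewise flips the sign in the $mN-nM$ relation of \eqref{eq:darboux}.

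Next I would expand each of the three coefficients $lM-mL$, $lN-nL$, $mN-nM$ of the regularized equation~\eqref{eq_curv_lines2} as Taylor series to first order in $(u,v)$. The constant terms vanish precisely because $p$ is an affine umbilic point: by Lemma~\ref{lem:edoacl}, the hyperbolic umbilic condition is $q_{40}=q_{04}$ and $q_{31}=q_{13}$, and substituting these into the $b_{ij}$ kills the zeroth-order obstruction, leaving the linear terms $a_1u+b_1v$, $2a_2u+2b_2v$ and $a_1u+b_1v$ as claimed. Reading off the linear coefficients then yields explicit formulas for $a_1,b_1,a_2,b_2$ in terms of $\sigma$ and the fifth-order Pick invariants $q_{ij}$; I expect the hyperbolic umbilic relations $q_{31}=q_{13}$, $q_{40}=q_{04}$ to be exactly what produces the stated combinations such as $a_1=\tfrac14(2\sigma q_{31}+q_{23}-q_{41})$, where the $+q_{23}$ (versus $-q_{23}$ in the elliptic case) is again a consequence of the indefinite metric.

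The only genuinely delicate point—and the step I would flag as the main obstacle—is bookkeeping the signs correctly throughout, because the hyperbolic normal form differs from the elliptic one in two coupled places at once: the sign of $v^2$ in the quadratic part of $h$, and the sign of $v^2$ in the cubic Pick term ($u^3-3uv^2$ becomes $u^3+3uv^2$). These two sign changes interact in the products defining $l,m,n$ and in the determinant $|LN-M^2|$, so a naive copy of the elliptic calculation with $v\mapsto iv$ would not be legitimate; each determinant and each product must be recomputed honestly. In practice this amounts to redoing the same ``straightforward calculations'' invoked in Lemma~\ref{lem:difeqACL}, so after the setup above the proof reduces to a verification best left to a computer algebra check, and I would simply write \emph{Straightforward calculations} as the proof, exactly as in the elliptic case.
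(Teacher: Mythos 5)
Your proposal is correct and coincides with the paper's argument: the paper's proof of this lemma is literally the phrase ``Straightforward calculations,'' i.e.\ exactly the direct expansion you describe, substituting the hyperbolic Pick normal form into the regularized equation \eqref{eq_curv_lines2}, using the umbilic conditions $q_{40}=q_{04}$, $q_{31}=q_{13}$ to kill the constant terms, and reading off the linear coefficients. Your additional remarks on the sign bookkeeping (the indefinite leading form $L=1$, $N=-1$, $M=0$ at the origin forcing $mN-nM=+(lM-mL)+O(2)$ and hence the $du^2+dv^2$ factor) are consistent with the stated formulas and add useful detail the paper omits.
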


\begin{proof}
Straightforward calculations. \qed
\end{proof}

\medskip

Let $J= {  a_2}\,{  b_1} -{ a_1}\,{  b_2}$ and consider the discriminant function $\delta$ given by 
\begin{equation*}
\delta(u,v)=\left(lN-nL\right)^2-4(lM-mL)(mN-nM).
\end{equation*}

\begin{lemma}
At an elliptic umbilic point, we have that
 \begin{equation*}
 \det(\text{Hess}(  \delta) {(0,0)})=64 J^2,
\end{equation*}
while at an hyperbolic umbilic point,
\begin{eqnarray*}
	\det(\text{Hess}(  \delta) {(0,0)})=-64 J^2.
\end{eqnarray*}
\end{lemma}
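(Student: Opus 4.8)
The plan is to read off the second-order Taylor expansion of $\delta$ directly from the linear normal forms established in the two preceding lemmas, since everything reduces to the coefficients $a_1,b_1,a_2,b_2$. The crucial observation is that at an umbilic point each of the three coefficient functions $lM-mL$, $lN-nL$, $mN-nM$ vanishes at the origin and has no constant term; its expansion begins with a linear form. Consequently $\delta$, being a homogeneous quadratic expression in these three functions, vanishes to second order at $(0,0)$, so that both $\delta(0,0)$ and $\nabla\delta(0,0)$ vanish and the $2$-jet of $\delta$ is exactly a quadratic form. The coefficients of that quadratic form are, up to the usual factor of $2$, the entries of $\text{Hess}(\delta)(0,0)$.

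First I would treat the elliptic case. Substituting $lM-mL=a_1u+b_1v+O(2)$, $lN-nL=2a_2u+2b_2v+O(2)$, and $mN-nM=-a_1u-b_1v+O(2)$ into $\delta=(lN-nL)^2-4(lM-mL)(mN-nM)$ and discarding terms of order $\ge 3$ gives
\[
\delta(u,v)=4\bigl(a_2u+b_2v\bigr)^2+4\bigl(a_1u+b_1v\bigr)^2+O(3).
\]
From this the Hessian entries are $\delta_{uu}=8(a_1^2+a_2^2)$, $\delta_{uv}=8(a_1b_1+a_2b_2)$, $\delta_{vv}=8(b_1^2+b_2^2)$, so that
\[
\det\bigl(\text{Hess}(\delta)(0,0)\bigr)=64\bigl[(a_1^2+a_2^2)(b_1^2+b_2^2)-(a_1b_1+a_2b_2)^2\bigr].
\]

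The key algebraic step is then the Lagrange identity $(a_1^2+a_2^2)(b_1^2+b_2^2)-(a_1b_1+a_2b_2)^2=(a_1b_2-a_2b_1)^2=J^2$, which yields $\det(\text{Hess}(\delta)(0,0))=64J^2$. For the hyperbolic case the only change in the normal forms is that the third coefficient becomes $mN-nM=a_1u+b_1v+O(2)$, which introduces a relative sign in the product term and gives $\delta=4(a_2u+b_2v)^2-4(a_1u+b_1v)^2+O(3)$. The same extraction of Hessian entries together with the variant identity $(a_2^2-a_1^2)(b_2^2-b_1^2)-(a_2b_2-a_1b_1)^2=-(a_1b_2-a_2b_1)^2=-J^2$ then produces $\det(\text{Hess}(\delta)(0,0))=-64J^2$.

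I do not expect a serious obstacle: the content is essentially the Lagrange identity applied to the leading linear forms of the three coefficient functions, and the split between the two cases is governed entirely by the sign of the third coefficient recorded in the umbilic normal forms. The only point demanding care is the bookkeeping of remainder terms — one must confirm that the $O(2)$ tails contribute only at order $\ge 3$ in $\delta$ and hence leave the Hessian at the origin unchanged — but this is immediate, since squaring a linear form, or multiplying a linear form by an $O(2)$ term, already lands in $O(3)$.
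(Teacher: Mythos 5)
Your proof is correct and matches the paper's approach: the paper's proof is simply ``straightforward calculations,'' and your argument supplies exactly those calculations, substituting the linear normal forms of $lM-mL$, $lN-nL$, $mN-nM$ into $\delta$ and applying the Lagrange identity (with the expected sign change in the hyperbolic case). The remainder-term bookkeeping you flag is handled correctly, so nothing is missing.
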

\begin{proof}
Straightforward calculations.\qed
\end{proof}

Assuming that $J\ne 0$, the discriminant function has a Morse singularity at the origin; this is, $\delta$ is equivalent, by a change of coordinates in the source, to $u^2+v^2$ in the elliptic case and to $u^2-v^2$ in the hyperbolic case. In the elliptic case, the curvature lines are defined outside the isolated umbilic point, while in the hyperbolic case there exist two smooth curves crossing transversally the origin and the solutions of \eqref{eq_curv_lines2} are in the region where $\delta\geq0$.
The binary differential equation with discriminant function having Morse singularity at the origin of type $A_1^+$, resp. $A_1^-$ (in the Arnold's notation; for more details about $A_k^{\pm}$ singularities see \cite[Chapter 11]{Arnold1985}), was well studied in \cite{Bruce1995} and the conclusion of this part is referred to this paper.
\begin{figure}[h!!!]
\centering
	\includegraphics[width=10cm,clip]{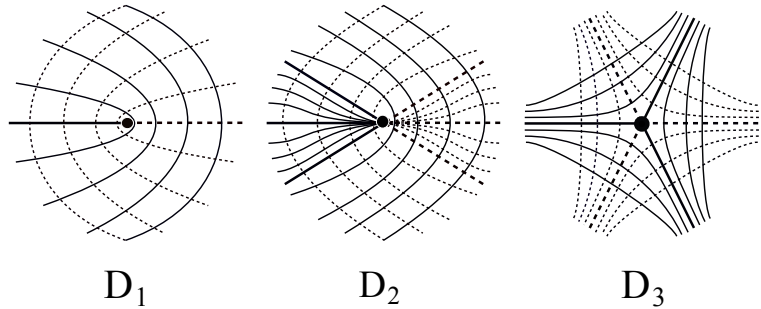}\\
	\caption{Affine lines of curvature near  an  affine umbilic point (elliptic case).}
	\label{fig1}
	\end{figure}

Consider the cubic polynomial 
\begin{equation*}	
	 p_{3e}= {  b_1}\,{k}^{3}+ \left( {  a_1}-2\,{  b_2} \right) {k}^{2} - \left(  { b_1}+2\,{  a_2} \right) k-{  a_1}.
\end{equation*}
in the elliptic case and 
\begin{equation*}
p_{3h}= { b_1}\,{k}^{3}+ \left( {  a_1}+2\,{  b_2} \right) {k}^{2}+\left( { b_1}+2\,{  a_2} \right) k+ {  a_1}.
\end{equation*}
in the hyperbolic case. 
The discriminants of these polynomials are denoted by $\Delta_e$ and $\Delta_h$, respectively.
%
%\begin{equation*}
%\aligned \Delta_e=& 4\, b_1^{4}+24\,{  a_2}\,  b_1 ^{3}+ \left( -8\, 
				%  a_1^{2}-40\,{  a_1}\,{  b_2}+48\, a_2^{2}+4\,
				 % b_2^{2} \right) b_1^{2}\\
				 % +&8\,{  a_2}\, \left( 5\,
				 % a_1^{2}-11\,{  a_1}\,{  b_2}+4\,  a_2^{2}+2\,
				  %b_2^{2} \right) {  b_1}+4\, \left( {  a_1}-2\,{  b_2}
		%\right) ^{2} \left(a_1^{2}-2\,{  a_1}\,{  b_2}+ 
				%a_2^{2} \right) 
	%\endaligned
%\end{equation*}
%and
%\begin{equation*}
%\aligned \Delta_h=& -4\,b_1^{4}-24\,{  a_2}\, b_1^{3}+ \left( -8\,
		  %a_1^{2}+40\,{  a_1}\,{  b_2}-48\,  a_2^{2}+4\,
		  %b_2^{2} \right)   b_1^{2}\\
		  %+&8\,{  a_2}\, \left( 5\,
		  %a_1^{2}+11\,{  a_1}\,{  b_2}-4\, a_2^{2}+2\,
		  %b_2^{2} \right) {  b_1}-4\, \left( {  a_1}+2\,{  b_2}
%\right) ^{2} \left(a_1^{2}+2\,{  a_1}\,{  b_2}-  
		%a_2^{2} \right) 
%\endaligned
%\end{equation*}
The condition $\Delta_e\neq 0$, resp. $\Delta_h\neq 0$, means that $p_{3e}$, resp. $p_{3h}$, has no double roots. For the next two propositions, see Fig. 2 and 3 of \cite{Bruce1995}. See also \cite{Bruce1989} and \cite{GS-1982}.

\begin{proposition}\label{prop_A1+}
Consider an isolated elliptic affine umbilic point and assume $J\neq 0$ and $\Delta_e\neq 0$. Then the configuration of affine curvature lines are locally topological equivalent to the models presented in Fig. \ref{fig1}.  The affine configuration is completely determined by the 5-jet of the surface. More precisely:
\begin{itemize}
\item[(i)]  If $\Delta_e < 0$ then the affine umbilic point  is of type $D_1$, having a hyperbolic sector for each family of affine principal  lines. The binary differential equation of affine curvature lines is topologically equivalent to $vdv^2+2ududv-vdu^2=0$. See Fig. \ref{fig1}, left.
\item[(ii)]  If $\Delta_e > 0$ and $J <0$ then the affine umbilic point  is of type $D_2$, having a hyperbolic sector and a parabolic sector  for each family of affine principal  lines. The binary differential equation of affine curvature lines is topologically equivalent to $vdv^2+\frac 12 ududv-vdu^2=0$. See Fig. \ref{fig1}, center.
\item[(iii)]  If $\Delta_e > 0$ and $J >0$ then the affine umbilic point  is of type $D_3$, having three hyperbolic sectors for each family of affine principal  lines. The binary differential equation of affine curvature lines is topologically equivalent to $vdv^2-2ududv-vdu^2=0$. See Fig. \ref{fig1}, right.
\end{itemize}
%
%\martin{formas normais 8-jan-2018}
\end{proposition}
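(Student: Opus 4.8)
The plan is to reduce the statement to the classification of binary differential equations (BDEs) whose discriminant has a Morse singularity of positive type, which is exactly the theory developed in \cite{Bruce1995}, and then to pin down the discrete invariants that select among the three models. By Lemma \ref{lem:difeqACL} the equation \eqref{eq_curv_lines2} near an elliptic umbilic reads
$$
(a_1u+b_1v)(du^2-dv^2)+2(a_2u+b_2v)\,du\,dv+O(2)=0,
$$
with $a_1,b_1,a_2,b_2$ explicit polynomials in $\sigma$ and the fifth-order coefficients $q_{ij}$; in particular every invariant occurring below is a function of the $5$-jet of $h$, which already accounts for the determinacy assertion once the topological type has been fixed. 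Since $\det(\mathrm{Hess}\,\delta)(0,0)=64J^2>0$ under the hypothesis $J\neq 0$, the discriminant $\delta$ has an $A_1^+$ Morse singularity, so after a change of coordinates in the source $\delta\sim u^2+v^2$ and the two curvature foliations are defined on the punctured neighborhood.

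The next step is to exhibit the role of the cubic $p_{3e}$. A solution reaching the umbilic tangent to the line $v=ku$ must, to leading order, satisfy $dv=k\,du$; substituting $v=ku$ and $dv=k\,du$ into the principal part and cancelling the common factor $u$ gives exactly
$$
(a_1+b_1k)(1-k^2)+2k(a_2+b_2k)=0,
$$
which, after expanding and multiplying by $-1$, is precisely $p_{3e}(k)=0$. Thus the real roots of $p_{3e}$ are the admissible tangent directions (separatrices) of the affine curvature lines at the umbilic, and the hypothesis $\Delta_e\neq 0$ forces these roots to be simple, so each separatrix is nondegenerate. The number of real roots is therefore the combinatorial datum controlling the picture: one real root when $\Delta_e<0$, and three distinct real roots when $\Delta_e>0$.

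With the separatrix directions identified, I would invoke the topological normal forms of \cite{Bruce1995}. Lifting the BDE to the projective tangent bundle produces a smooth surface (because the singularity is $A_1^+$) on which the solutions become the integral curves of a single smooth vector field; over the origin this field has hyperbolic singularities exactly at the real roots of $p_{3e}$, and the projected separatrices assemble into the sectors described in the statement. A single real root ($\Delta_e<0$) yields the $D_1$ configuration; three distinct real roots ($\Delta_e>0$) yield either $D_2$ or $D_3$, and the two are separated by the sign of $J=a_2b_1-a_1b_2$, which encodes the orientation of the lifted field and hence whether a parabolic sector appears ($J<0$, type $D_2$) or all sectors are hyperbolic ($J>0$, type $D_3$). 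I expect this last dichotomy to be the crux of the argument: distinguishing $D_2$ from $D_3$ requires the orientation (index) data on the lifted surface, not merely the count of real roots, so the correct matching of $\mathrm{sign}(J)$ with the two patterns is where the work concentrates.

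Finally, to certify the three model equations it suffices to compute their invariants. Writing each model in the normal form above, for $v\,dv^2+2u\,du\,dv-v\,du^2=0$ one reads $a_1=0$, $b_1=-1$, $a_2=1$, $b_2=0$, so $p_{3e}(k)=-k(k^2+1)$ has a single real root and $J=-1$: this is $D_1$. For $v\,dv^2+\tfrac12u\,du\,dv-v\,du^2=0$ one finds $a_2=\tfrac14$, whence $p_{3e}(k)=-k(k^2-\tfrac12)$ has three real roots and $J=-\tfrac14<0$: this is $D_2$. For $v\,dv^2-2u\,du\,dv-v\,du^2=0$ one finds $a_2=-1$, whence $p_{3e}(k)=-k(k^2-3)$ has three real roots and $J=1>0$: this is $D_3$. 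Matching these root counts and signs with the generic BDE through the equivalence of \cite{Bruce1995} completes the identification, and since all the relevant invariants are polynomial in $(\sigma,q_{ij})$, the affine configuration is determined by the $5$-jet, as claimed.
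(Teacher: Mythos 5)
Your proposal is correct and follows essentially the same route as the paper, which simply cites \cite{Bruce1989}, \cite{GS-1982}, \cite{GS-1991} and appeals to the Lie--Cartan resolution of the BDE plus the method of canonical regions; your derivation of $p_{3e}(k)=0$ as the separatrix-direction equation and your explicit check that the three model equations have the invariants $(\mathrm{sign}\,\Delta_e,\mathrm{sign}\,J)$ matching cases (i)--(iii) are correct and in fact supply detail the paper leaves implicit. The only step you defer (that $\mathrm{sign}(J)$ separates $D_2$ from $D_3$ via the node/saddle count of the lifted field) is exactly the step the paper also delegates to the cited references, so your argument is no less complete than the paper's own.
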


\begin{proof} See  \cite{Bruce1989}, \cite{GS-1982}, \cite{GS-1991}. The configuration is established performing the  resolution of the binary differential equation using the Lie-Cartan approach.    The construction of the topological equivalence can be done by the method of canonical regions.  \qed
	\end{proof}

In the hyperbolic case, we must also consider the branches of the umbilic point, which are given by the zeros of 
the polynomial 
$$
p_{2h}=\left( -4\,  b_1^{2}+4\,  b_2^{2} \right) {k}^{2}+
\left( -8\,{  a_1}\,{  b_1}+8\,{  a_2}\,{  b_2} \right) k
-4\, a_1^{2}+4\,  a_2^{2}.
$$
The resultant of $p_{2h}$ and $p_{3h}$ is given by
\begin{equation*}
R=64\, \left( -{  b _1}+{  a _1}-{  a_2}+{  b_2} \right) ^{2}
\left( {  b_1}+{  a_1}+{  a_2}+{  b_2} \right) ^{2}
\left( {  a_1}\,{  b_2}-{  a_2}\,{  b_1} \right) ^{2}.
\end{equation*}
The condition $R\neq 0$ means that $p_{2h}$ and $p_{3h}$ have no common roots.
%
%\martin{J\'a verifiquei as novas expressoes nos itens 3,4 e 5 e estao corretas}
%
\begin{proposition}\label{prop_A1-}
Consider an isolated hyperbolic affine umbilic point and assume $J\neq 0$, $\Delta_h\neq 0$ and $R\neq 0$. Then the configurations of affine curvature lines are locally topological equivalent to the models presented in Fig. \ref{fig2}.  The affine configurations are completely determined by the 5-jet of the surface. More precisely:
\begin{itemize}
\item[(i)]  If $\Delta_h<0$ and $J<0$ then the affine umbilic point is of type $A_1$, having a hyperbolic sector for each family of affine principal  lines. The binary differential equation of affine curvature lines is topologically equivalent to $vdv^2+2ududv+vdu^2=0$. See $A_1$ Fig. \ref{fig2}. 
\item[(ii)] If $\Delta_h<0$ and $J>0$ then the affine umbilic point  is of type $A_2$, having a parabolic sector for each family of affine principal  lines. The binary differential equation of affine curvature lines is topologically equivalent to $vdv^2-\frac 12 ududv+vdu^2=0$. See $A_2$ Fig. \ref{fig2}.
\item[(iii)] If $\Delta_h>0$, $a_2+b_1>\left| a_1+b_2 \right|$ and $J>0$ then the affine umbilic point  is of type $A_3$, having one hyperbolic sectors and two parabolic sectors for each family of affine principal lines. The binary differential equation of affine curvature lines is topologically equivalent to $vdv^2-\frac 43ududv+vdu^2=0$. See $A_3$ Fig. \ref{fig2}.
\item[(iv)] If $\Delta_h>0$, $a_2+b_1>\left| a_1+b_2\right|$ and $J<0$ or $\Delta_h>0$, $\left|a_2+b_1\right|<\left| a_1+b_2\right|$ and $J>0$ then the affine umbilic point  is of type $A_4$, having two hyperbolic sectors and one parabolic sector for each family of affine principal  lines. The binary differential equation of affine curvature lines is topologically equivalent to $vdv^2+2(v-u)dudv+vdu^2=0$. See $A_4$ Fig. \ref{fig2}.
\item[(v)] If $\Delta_h>0$, $a_2+b_1<-\left|a_1+b_2\right|$  or $\Delta_h>0$, $\left|a_2+b_1\right|<\left|a_1+b_2\right|$ and $J<0$ then the affine umbilic point  is of type $A_5$, having three hyperbolic sectors for each family of affine principal lines. The binary differential equation of affine curvature lines is topologically equivalent to $vdv^2-4ududv+vdu^2=0$. See $A_5$ Fig. \ref{fig2}.
\end{itemize}
%
%\martin{formas normais 8-jan-2018}
\end{proposition}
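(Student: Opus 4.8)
The plan is to resolve the equation by the Lie--Cartan method, exactly as in the elliptic case of Proposition \ref{prop_A1+}; the new feature is that the discriminant is now of Morse type $A_1^-$, so it consists of two transverse branches through the origin that have to be tracked. Writing the regularized equation \eqref{eq_curv_lines2} as $A\,du^2+B\,du\,dv+C\,dv^2=0$ with $A=lM-mL$, $B=lN-nL$ and $C=mN-nM$, I would first lift it to the projective $1$-jet bundle by setting $p=dv/du$ and passing to the surface $M=\{F=0\}$, where $F(u,v,p)=A+Bp+Cp^2$. By \eqref{eq:darboux} all three coefficients vanish at the origin, so the whole exceptional fibre $\{(0,0,p)\}$ lies on $M$, and the Lie--Cartan field
\begin{equation*}
\mathcal{X}=F_p\,\partial_u+p\,F_p\,\partial_v-(F_u+p\,F_v)\,\partial_p
\end{equation*}
is tangent to $M$ and projects onto the solutions we seek.

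The decisive computation is the restriction of $\mathcal{X}$ to the exceptional fibre. There $F_p(0,0,p)=0$, while substituting \eqref{eq:darboux} gives $(F_u+p\,F_v)|_{u=v=0}=p_{3h}(p)$, so the singularities of $\mathcal{X}$ on the fibre are exactly the roots of $p_{3h}$ (a symmetric analysis, using $A=C$ to leading order, covers the point $p=\infty$). The hypothesis $\Delta_h\neq 0$ forces these roots to be simple, giving one singularity when $\Delta_h<0$ and three when $\Delta_h>0$. I would then linearize $\mathcal{X}$ at each root $p_0$: one eigenvalue lies along the fibre and equals $-p_{3h}'(p_0)$, and the other lies along the base and equals the value at $p_0$ of $q(p)=b_1p^2+(a_1+b_2)p+a_2$. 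The factorization $p_{3h}(p)=p\,q(p)+r(p)$, with $r(p)=b_2p^2+(a_2+b_1)p+a_1$, explains why the combinations $a_1+b_2$ and $a_2+b_1$ appear in the statement: they are the middle coefficients of $q$ and $r$. The sign of the product of the two eigenvalues, which decides saddle versus node, is then read off from $p_{3h}'(p_0)$ and $q(p_0)$.

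The rest is bookkeeping. A hyperbolic saddle of $\mathcal{X}$ projects to a hyperbolic sector and a node to a parabolic sector, provided no singular direction $p_0$ lies over a branch of the discriminant; this is precisely what $R\neq 0$ secures, since the roots of $p_{2h}$ are the slopes of the two discriminant branches and $R$ is the resultant of $p_{2h}$ and $p_{3h}$. Counting then yields a single saddle ($A_1$) or a single node ($A_2$) when $\Delta_h<0$, and one of the patterns one saddle and two nodes ($A_3$), two saddles and one node ($A_4$), or three saddles ($A_5$) when $\Delta_h>0$; matching each admissible pattern to the inequalities on $J$, $a_2+b_1$ and $a_1+b_2$ reproduces items (i)--(v) together with their normal forms, in agreement with \cite{Bruce1995}. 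Because $J\neq 0$, $\Delta_h\neq 0$ and $R\neq 0$ make every fibre singularity hyperbolic, the local configuration is structurally stable, and the topological equivalence with the listed normal forms is constructed by the method of canonical regions, exactly as in Proposition \ref{prop_A1+}.

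I expect the main obstacle to be the linearization step and the translation of eigenvalue signs into the saddle/node dichotomy: in the $A_1^-$ case the projection $M\to\mathbb{R}^2$ folds along two transverse branches, so the parabolic (nodal) and hyperbolic (saddle) sectors can be separated cleanly only after checking, through $R\neq 0$, that the singular directions avoid those branches, and after verifying that the eigenvalue signs at the three roots of $p_{3h}$ realize exactly the combinations recorded in (i)--(v).
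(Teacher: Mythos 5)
Your argument is correct and takes essentially the same route as the paper's proof, which simply cites Bruce--Tari (1995) for the resolution of the BDE via hyperbolic singularities of the lifted (Lie--Cartan) field on the exceptional fibre and invokes the method of canonical regions for the topological equivalence. You in fact supply more detail than the paper does --- the identification $(F_u+pF_v)|_{u=v=0}=p_{3h}(p)$, the eigenvalue pair $\bigl(-p_{3h}'(p_0),\,2q(p_0)\bigr)$ with $p_{3h}=p\,q+r$, and the reading of $R\neq 0$ as separating the singular directions from the tangents of the discriminant branches (roots of $p_{2h}$) --- and these all check out against the coefficients $a_i,b_i$ of the paper's normal form.
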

\begin{figure}[H]
	\centering
	\includegraphics[width=8.4cm,clip]{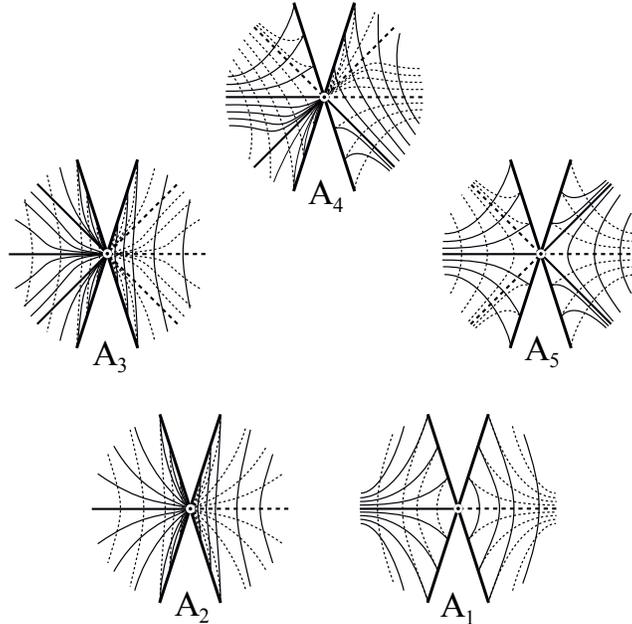}\\
	\caption{Affine lines of curvature near  an affine umbilic point (hyperbolic case).
		%s in the Euclidean hyperbolic neighborhood.
	}\label{fig2}
\end{figure}
\begin{proof} See  \cite{Bruce1995}. The five local models are obtained performing a resolution of the binary differential equation in terms of hyperbolic singularities of local vector fields defined in the implicit surface of the differential equation. The local homeomorphism given the topological equivalence to the topological normal forms stated   can be constructed using the method of canonical regions, see \cite{GS-1982}.\qed
	\end{proof}

%########################################################################################
%########################################################################################
%########################################################################################
\section{Affine curvature lines near points with double eigenvalues of the affine shape operator}\label{section4}
%########################################################################################
%
In this section we consider the singularities of equation \eqref{eq_curv_lines2} when the affine shape operator has a double eigenvalue but is not a  hyperbolic umbilic point. Consider a surface $S\in\mathbb{R}^3$ parametrized by $X(u,v)=\left(u,v,h(u,v)\right)$. According  to \cite[Chapter 1]{Buchin}, in a small neighborhood of hyperbolic point $p=X(0,0)=(0,0)$, we can take the parametrization $(u,v,h(u,v))$, where 
\begin{eqnarray}\label{eq-nor_form_Buchin}
h(u,v) & = & uv+\frac{1}{6}\left(q_{30}u^3+q_{03}v^3\right)+\frac{1}{24}\left(q_{40}u^4+4q_{31}u^3v+6q_{22}u^2v^2+\right.\nonumber\\ 
& + & \left.4q_{13}uv^3+q_{04}v^4\right)+\frac{1}{120}\left(q_{50}u^5+5q_{41}u^4v+10q_{32}u^3v^2+\right.\nonumber\\
& + & \left.10q_{23}u^2v^3+5q_{14}uv^4+q_{05}v^5\right)+\frac{1}{720}\left(q_{60}u^6+6q_{51}u^5v+\right.\\
& + & \left.15q_{42}u^4v^2+20q_{33}u^3v^3+15q_{24}u^2v^4+6q_{15}uv^5+q_{06}v^6\right)+O(7).\nonumber
\end{eqnarray}
We will adopt this normal form instead of that used in the analysis of hyperbolic umbilic points, see Proposition \ref{Pick_norm_forms}, in order to simplify the calculations.

At $(0,0)$ it follows that $\xi_u(0,0)= -\frac{1}{2}(q_{22}-\frac{1}{2}q_{30}q_{03},q_{31})$ and $\xi_v(0,0)=-\frac{1}{2}(q_{13},q_{22}-\frac{1}{2}q_{30}q_{03}).$ Assuming $q_{13}\neq 0$ and $q_{31}=0$,  the double eigenvalue is  $-\frac{1}{2}(q_{22}-\frac{1}{2}q_{30}q_{03}) $ and  the unique eigenvector is $(1,0).$
Substituting \eqref{eq-nor_form_Buchin} in \eqref{eq_curv_lines2} we obtain that the differential equation of affine principal lines is given by
\begin{equation*}
A(u,v)du^2+B(u,v)dudv+C(u,v)dv^2=0,
\end{equation*}
where,
\begin{eqnarray*}
	A(u,v) & = & -32q_{31}+16\left(-2q_{03}q_{30}^2+7q_{22}q_{30}-32q_{41}\right)u+16\left(q_{03}q_{40}+\right.\\
	& + & \left.4q_{13}q_{30}-2q_{32}\right)v-16\left(3q_{03}q_{30}q_{40}+3q_{13}q_{30}^2-6q_{22}q_{40}-\right.\\
	& - & \left.5q_{30}q_{32}+q_{31}^2+q_{51}\right)u^2-16\left(10q_{03}q_{30}q_{31}+2q_{04}q_{30}^2-q_{03}q_{50}-\right.\\
	& - & \left.5q_{13}q_{40}-5q_{22}q_{31}-7q_{23}q_{30}+2q_{42}\right)uv+4\left(7q_{03}^2q_{30}^2-\right.\\
	& - & \left.32q_{03}q_{22}q_{30}+4q_{03}q_{41}+2q_{04}q_{40}+8q_{14}q_{30}+18q_{22}^2-4q_{33}\right)v^2+\\
	& + & O(3),
	%&   &  \\
	\end{eqnarray*} 
	\begin{eqnarray*}
	B(u,v) & = & 32q_{30}q_{13}u-32q_{03}q_{31}v-16\left(4q_{03}q_{30}q_{31}+q_{04}q_{30}^2-q_{13}q_{40}+\right. \\
	& + & \left.q_{22}q_{31}-2q_{23}q_{30}\right)u^2-32(q_{03}q_{41}-q_{14}q_{30})uv+16\left(q_{03}^2q_{40}+\right. \\
	& + & \left.4q_{03}q_{13}q_{30}-2q_{03}q_{32}-q_{04}q_{31}+q_{13}q_{22}\right)v^2+O(3),
	%&   &  \\
	\end{eqnarray*} 
	\begin{eqnarray*}
	C(u,v) & = & 32q_{13}-16\left(4q_{03}q_{31}+q_{04}q_{30}-2q_{23}\right)u+16\left(2q_{03}^2q_{30}-7q_{03}q_{22}+\right. \\
	& + & \left.2q_{14}\right)v-4\left(7q_{03}^2q_{30}^2-32q_{03}q_{22}q_{30}+8q_{03}q_{41}+2q_{04}q_{40}+\right. \\
	& + & \left.4q_{14}q_{30}+18q_{22}^2-4q_{33}\right)u^2+16\left(2q_{03}^2q_{40}+10q_{03}q_{13}q_{30}-\right.\\
	& - & \left.7q_{03}q_{32}-5q_{04}q_{31}-q_{05}q_{30}-5q_{13}q_{22}+2q_{24}\right)uv+16\left(3q_{03}^2q_{31}+\right. \\
	& + & \left.3q_{03}q_{04}q_{30}-5q_{03}q_{23}-6q_{04}q_{22}+q_{13}^2+q_{15}\right)v^2+O(3).
\end{eqnarray*} 
The  discriminant function    $\delta=B^2-4AC$ is given by
\begin{eqnarray*}\label{eq:delta}
	\delta(u,v) & = & 4096q_{31}q_{13}+2048\left(2q_{03}q_{13}q_{30}^2-4q_{03}q_{31}^2-q_{04}q_{30}q_{31}-\right.\\
	& - & \left.7q_{13}q_{22}q_{30}+2q_{13}q_{41}+2q_{23}q_{31}\right)u+2048\left(2q_{03}^2q_{30}q_{31}-\right.\\
	& - &\left.q_{03}q_{13}q_{40}-7q_{03}q_{22}q_{31}-4q_{13}^2q_{30}+2q_{13}q_{32}+2q_{14}q_{31}\right)v+O(2).
\end{eqnarray*}
Suppose $q_{31}=0$ and $q_{13}\neq0$, thus $p=(0,0)$ is a point in the discriminant set which is locally a smooth curve.

\begin{lemma}\label{lem:cusp} Let $q_{31}=0$,  $q_{13}(2q_{03}q_{30}^2-7q_{22}q_{30}+2q_{41} )\neq0$.  Then the configuration of affine curvature lines are locally topological equivalent near $p=(0,0)$ to the model  presented   in Fig. \ref{fig:cusp}. The binary differential equation of affine curvature lines  is topologically equivalent to the normal form $dv^2+u du^2=0.$
\end{lemma}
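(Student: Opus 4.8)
The plan is to recognize $p=(0,0)$ as a \emph{generic} point of the discriminant set, namely a point at which the binary differential equation of affine curvature lines has a single well-defined direction that is transverse to the discriminant, and then to invoke the standard classification of binary differential equations at such points, whose topological normal form is precisely the family of cusps $dv^2+u\,du^2=0$.

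First I would set $q_{31}=0$ and evaluate the coefficients at the origin. From the displayed expansions one reads $A(0,0)=-32q_{31}=0$, $B(0,0)=0$ and $C(0,0)=32q_{13}\neq0$, so the equation degenerates to $32q_{13}\,dv^2=0$ and the unique (double) direction at $p=(0,0)$ is $[du:dv]=[1:0]$, confirming the eigendirection $(1,0)$ already identified. In particular $\delta(0,0)=4096\,q_{31}q_{13}=0$, so $(0,0)$ lies on the discriminant. Next I would extract the two genericity facts encoded by the hypothesis. Differentiating $\delta=B^2-4AC$ and using $A(0,0)=B(0,0)=0$, $C(0,0)=32q_{13}$ gives $\delta_u(0,0)=-4A_u(0,0)C(0,0)=-128\,q_{13}\,A_u(0,0)$; comparing with the displayed linear part of $\delta$ yields $\delta_u(0,0)=2048\,q_{13}\bigl(2q_{03}q_{30}^2-7q_{22}q_{30}+2q_{41}\bigr)$, which is nonzero by hypothesis (and hence also $A_u(0,0)\neq0$). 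Two consequences follow at once: (a) $\nabla\delta(0,0)\neq0$, so the discriminant is a smooth curve through the origin; and (b) since the tangent line to the discriminant is spanned by $(-\delta_v,\delta_u)$, the determinant of $(1,0)$ against this tangent equals $\delta_u(0,0)\neq0$, i.e. the unique direction $(1,0)$ is transverse to the discriminant.

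Finally I would conclude by resolving the equation on the projective line bundle, as in the Lie--Cartan approach used in the proofs of Propositions \ref{prop_A1+} and \ref{prop_A1-}. Writing $F(u,v,p)=C(u,v)p^2+B(u,v)p+A(u,v)$ with $p=dv/du$, the relevant point is $(0,0,0)$; there $F=0$ and $F_p=B(0,0)=0$, while $F_{pp}=2C(0,0)=64q_{13}\neq0$ and $F_u=A_u(0,0)\neq0$. Hence $M=\{F=0\}$ is a smooth surface near $(0,0,0)$ on which the projection $(u,v,p)\mapsto(u,v)$ has a fold along the criminant, and the Lie--Cartan field $\xi=F_p\,\partial_u+pF_p\,\partial_v-(F_u+pF_v)\,\partial_p$ equals $-A_u(0,0)\,\partial_p\neq0$ at $(0,0,0)$, so it is regular there and, since $A_u(0,0)\neq0$, transverse to the criminant. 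These are exactly the hypotheses under which the projected configuration is the family of cusps, giving the asserted topological equivalence to $dv^2+u\,du^2=0$ and the model of Fig.~\ref{fig:cusp}.

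The main obstacle is conceptual rather than computational: one must verify that the single scalar in the hypothesis, through the identity $\delta_u(0,0)=-128\,q_{13}A_u(0,0)$, \emph{simultaneously} forces smoothness of the discriminant and transversality of the unique direction to it, and then cite the correct normal-form theorem for this cusp stratum rather than re-deriving the equivalence. The only place where care is needed is the cross-check that the displayed first-order term of $A$ is consistent with the linear part of $\delta$ (so that $A_u(0,0)$ is indeed proportional to $2q_{03}q_{30}^2-7q_{22}q_{30}+2q_{41}$); this is the routine calculation underlying the whole reduction.
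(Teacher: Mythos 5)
Your proposal is correct and follows essentially the same route as the paper: both identify $p$ as a point where the discriminant is a regular curve transverse to the unique (double) affine principal direction $(1,0)$, which is exactly the classical Cibrario/folded-regular case with topological normal form $dv^2+u\,du^2=0$. The only difference is in how the cuspidal behaviour is exhibited --- the paper writes down the explicit parametrization $u(t)=c\,t^2+O(t^3)$, $v(t)=c' t^3+O(t^4)$ of the solution through the origin, whereas you verify regularity of the Lie--Cartan field and its transversality to the criminant; both are standard and complete ways to finish, and your cross-check of $\delta_u(0,0)$ against $A_u(0,0)$ correctly isolates the genericity condition $q_{13}(2q_{03}q_{30}^2-7q_{22}q_{30}+2q_{41})\neq 0$.
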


\begin{figure}[H]
\centering
	\includegraphics[scale=0.3,clip]{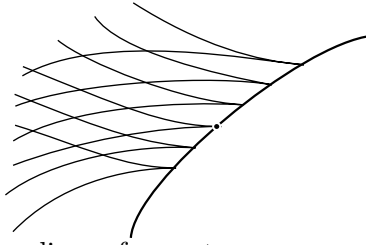}\\
	\caption{Affine lines of curvature near a point with double eigenvalue of the affine shape operator.\label{fig:cusp}
	} 
\end{figure}

\begin{proof} This  is a classical case, under the conditions stated in the lemma the discriminant set defined by $\delta=B^2-4AC=0$ is transversal to the unique affine principal direction at $\delta=0.$ Direct analysis shows that the solution  passing through $(0,0)$ is   parametrized by
	
	$$\aligned u(t)=& 512\,  q_{13}\, \left( 2\,q_{03}\,q_{30}^{2}-7\,  q_{22}\, 
	q_{30}+2\,  q_{41} \right) {t}^{2}+O(t^3)
	\\
	v(t)=&  \frac{16384}3 q_{13}\, \left( 2\,q_{03}\,q_{30}^{2}-7\,  q_{22}\, 
	q_{30}+2\,  q_{41} \right)^2 {t}^{3}+O(t^4)
	\endaligned $$
	Therefore, locally the solutions are curves having cuspidal type along the discriminant set $\delta=0.$
	
	The local homeomorphism of the topological equivalence  between the binary differential equation of affine curvature lines and the normal form stated can be constructed using the method of canonical regions, see \cite{GS-1982}.
\qed	
\end{proof}

\begin{lemma}\label{lem:double}
	Consider the parametrization given by \eqref{eq-nor_form_Buchin} and suppose that $q_{13}\ne 0$, $q_{31}=0$, $2q_{03}q_{30}^2-7q_{22}q_{30}+2q_{41}=0$. Let\\
{\small $\left(18q_{13}q_{30}^{2}+ \left(13q_{03}q_{40}-22q_{32}\right)  q_{30}-24  q_{22} q_{40}+4\ q_{51} \right)\left(  q_{ 03} q_{40} +4 q_{13} q_{30}-2q_{32} \right)\ne 0$.}\\
Then the  discriminant curve $\delta=0$  is regular near $p$, tangent to the double affine principal direction $(1,0)$  and the contact between the discriminant curve and the asymptotic line passing through $p$ and tangent to $(1,0)$ is quadratic.
\end{lemma}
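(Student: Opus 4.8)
\medskip

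The plan is to read off regularity, tangency and the contact order directly from the low-order jets already available: the $2$-jet of the discriminant $\delta=B^2-4AC$ displayed before the lemma, together with the asymptotic equation of the graph $X(u,v)=(u,v,h(u,v))$.

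\emph{Regularity and tangency.} First I would impose the standing hypotheses $q_{31}=0$ and $2q_{03}q_{30}^2-7q_{22}q_{30}+2q_{41}=0$ on the expansion of $\delta$. The constant term $4096\,q_{31}q_{13}$ vanishes because $q_{31}=0$, and the coefficient of $u$, which equals $2048\,q_{13}\bigl(2q_{03}q_{30}^2-7q_{22}q_{30}+2q_{41}\bigr)$, vanishes by the degeneracy relation. Hence
\begin{equation*}
\delta(u,v)=2048\,\beta\,v+O(2),\qquad \beta=-q_{13}\bigl(q_{03}q_{40}+4q_{13}q_{30}-2q_{32}\bigr).
\end{equation*}
Since $q_{13}\neq0$ and the second factor of the nondegeneracy hypothesis is nonzero, $\beta\neq0$, so $d\delta(0,0)=(0,2048\beta)\neq0$. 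By the implicit function theorem $\{\delta=0\}$ is a regular curve through $p$, and its tangent is $\ker d\delta(0,0)=\{v=0\}$, the line spanned by the double affine principal direction $(1,0)$.

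\emph{The asymptotic line.} Asymptotic directions are affine invariant and, for the graph $X$, are the null directions of $h_{uu}\,du^2+2h_{uv}\,dudv+h_{vv}\,dv^2$. Writing the asymptotic line tangent to $(1,0)$ as $v=\psi(u)=d_2u^2+O(u^3)$ and using $h_{uu}(0,0)=h_{vv}(0,0)=0$, $h_{uv}(0,0)=1$ and $h_{uu}=q_{30}u+O(2)$, the order-$u$ part of $h_{uu}+2h_{uv}\psi'+h_{vv}(\psi')^2=0$ reads $q_{30}+4d_2=0$, so
\begin{equation*}
\psi(u)=-\tfrac14\,q_{30}\,u^2+O(u^3).
\end{equation*}

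\emph{Quadratic contact.} Finally I would restrict $\delta$ to the asymptotic line. Since the $u$-linear part of $\delta$ vanishes (equivalently the coefficient of $u$ in $A$ is then zero) under the degeneracy relation, the coefficient $P$ of $u^2$ in $\delta$ reduces to $P=(32q_{30}q_{13})^2-4\,(32q_{13})\,A_{uu}$, where $A_{uu}$ is the coefficient of $u^2$ in $A$. Substituting $v=\psi(u)$, only the $v$-term and the $u^2$-term survive at second order, so
\begin{equation*}
\delta\bigl(u,\psi(u)\bigr)=\bigl(P-512\,\beta\,q_{30}\bigr)u^2+O(u^3),
\end{equation*}
and collecting terms yields
\begin{equation*}
P-512\,\beta\,q_{30}=512\,q_{13}\bigl(18q_{13}q_{30}^2+(13q_{03}q_{40}-22q_{32})q_{30}-24q_{22}q_{40}+4q_{51}\bigr).
\end{equation*}
This is exactly $512\,q_{13}$ times the first factor of the nondegeneracy hypothesis, hence nonzero; equivalently the discriminant curve and the asymptotic line have distinct second-order coefficients, so they meet with contact of order exactly two. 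I expect the only genuine difficulty to be bookkeeping: arranging the $2$-jet of $B^2-4AC$ so that the cancellations forced by $q_{31}=0$ and by $2q_{03}q_{30}^2-7q_{22}q_{30}+2q_{41}=0$ are transparent, and checking that after these substitutions the surviving quadratic coefficient collapses to the single displayed factor (in particular that all $q_{04},q_{23},q_{41}$ contributions drop out).
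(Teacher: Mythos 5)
Your proposal is correct and follows essentially the same route as the paper: extract the $1$-jet of $\delta$ under the hypotheses $q_{31}=0$ and $2q_{03}q_{30}^2-7q_{22}q_{30}+2q_{41}=0$ to get regularity and tangency to $(1,0)$ via the implicit function theorem, compute the asymptotic parabola $v=-\tfrac14 q_{30}u^2+O(u^3)$, and detect quadratic contact from the mismatch of second-order coefficients (the paper solves for the discriminant curve as a graph $\delta_1(u)$ and compares coefficients, while you restrict $\delta$ to the asymptotic line — these are equivalent, and your final identity $P-512\beta q_{30}=512\,q_{13}\bigl(18q_{13}q_{30}^2+(13q_{03}q_{40}-22q_{32})q_{30}-24q_{22}q_{40}+4q_{51}\bigr)$ checks out against the paper's expansion). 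No gap.
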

%
%\martin{Nao consegui entender a condicao \\$q_{13}(2 q_{13}-q_{32})\left(9q_{13}-12q_{22}q_{40}-11q_{32}+2q_{51}\right)\neq0$.\\ Entendi porque $q_{13}(2 q_{13}-q_{32})\neq0$ mais a outra condicao nao sei de onde saiu.}
\begin{proof}
	The asymptotic lines are solutions of  $ Ldu^2+2Mduv+Ndv^2=0$. In the conditions stated it is given by the implicit differential equation
	\begin{eqnarray*}
		H & = & \left(q_{30}u+\frac{1}{2}q_{40}u^2+\frac{1}{2}q_{22}v^2+O(3)\right)du^2+\left(2-u^2+2q_{22}uv+\right.\\
		& + & \left.\left(q_{13}-1\right)v^2+O(3)\right)dvdu+\left(q_{03}v+\frac{1}{2}q_{22}u^2+q_{13}uv+\frac{1}{2}q_{04}v^2+\right.\\
		& + & \left.O(3)\right)dv^2=0
	\end{eqnarray*}
	The asymptotic lines through $(0,0)$ are given by $(u,v_1(u))$ and $(u_2(v),v)$ with
	\begin{equation*}
	v_1(u)=-\frac{1}{4}q_{30}u^2-\frac{1}{12}q_{40}u^3+O(4)\quad\text{and}\quad u_2(v)=-\frac{1}{4}q_{03}v^2-\frac{1}{12}q_{04}v^3+O(4).
	\end{equation*}
	Under the conditions $q_{31}=0$ and $2q_{03}q_{30}^2-7q_{22}q_{30}+2q_{41}=0$,  the discriminant curve $\delta=0$    is locally defined by
	\begin{eqnarray*}
		\delta(u,v) & = & -2048q_{13}\left(q_{03}q_{40}+4q_{13}q_{30}-2q_{32}\right)v+1024q_{13}\left(6q_{03}q_{30}q_{40}+\right. \\
		& + & \left.7q_{13}q_{30}^2-12q_{22}q_{40}-10q_{30}q_{32}+2q_{51}\right)u^2+1024\left(q_{03}q_{04}q_{30}q_{40}+\right. \\
		& + & 8q_{04}q_{13}q_{30}^2-2q_{03}q_{13}q_{50}-2q_{03}q_{23}q_{40}-2q_{04}q_{30}q_{32}-10q_{13}^2q_{40}-\\
		& - & \left.22q_{13}q_{23}q_{30}+4q_{13}q_{42}+4q_{23}q_{32}\right)vu+-512\left(4q_{03}^3q_{30}q_{40}+\right. \\
		& + & 19q_{03}^2q_{13}q_{30}^2-14q_{03}^2q_{22}q_{40}-8q_{03}^2q_{30}q_{32}-74q_{03}q_{13}q_{22}q_{30}+\\
		& + & 4q_{03}q_{14}q_{40}+28q_{03}q_{22}q_{32}+2q_{04}q_{13}q_{40}+24q_{13}q_{14}q_{30}+\\
		& + & \left.18q_{13}q_{22}^2-4q_{13}q_{33}-8q_{14}q_{32}\right)v^2+O(3).
	\end{eqnarray*}
	Therefore by the Implicit Function Theorem it follows that for the curve $(u,\delta_1(u))$ we have:
	\begin{equation*}
	\delta_1(u)=\frac{1}{2}\left(\frac{6q_{03}q_{30}q_{40}+7q_{13}q_{30}^2-12q_{22}q_{40}-10q_{30}q_{32}+2q_{51}}{q_{03}q_{40}+4q_{13}q_{30}-2q_{32}}\right)u^2+O(3).
	\end{equation*}
	So the contact is quadratic when 
	$$\frac{6q_{03}q_{30}q_{40}+7q_{13}q_{30}^2-12q_{22}q_{40}-10q_{30}q_{32}+2q_{51}}{q_{03}q_{40}+4q_{13}q_{30}-2q_{32}}+\frac 12 q_{30}\ne 0.$$
The last condition is equivalent to\\	
	{\small  
		$\left(  18 q_{13}q_{30}^{2}+ \left( 13  q_{03}  q_{40}-22q_{32} \right)  q_{30}-24  q_{22} q_{40}+4\ q_{51} \right)\left(  q_{ 03} q_{40} +4 q_{13} q_{30}-2q_{32} \right)\ne 0$.}
	\qed
\end{proof}
\begin{proposition} \label{prop: sfn} In the conditions of Lemma \ref{lem:double}, the  configuration of  affine principal lines near a point with double eigenvalues of the affine shape operator is topologically equivalent to  the three  models shown in Fig. \ref{fig16}. The affine configuration is completely determined by the 6-jet of the surface.
	
	The binary differential equation of affine curvature lines is topologically equivalent to the following normal forms
	$$\aligned &{ \rm folded   \; saddle } \hskip 1cm dv^2-2u\, du dv  -vdu^2=0 \\
	&{ \rm folded   \; foci }\hskip 1cm \;\;\; \; dv^2-2u\, dudv +(2 u^2-v )du^2=0 \\
	&	{ \rm folded   \;node }\hskip 1cm\; \; \, dv^2 -2 u\,dudv +(\frac{14}9u^2-v)du^2=0  \endaligned$$
		
\end{proposition}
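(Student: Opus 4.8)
The plan is to resolve the binary differential equation by the Lie--Cartan method, exactly as in the proofs of Propositions \ref{prop_A1+} and \ref{prop_A1-}, and to recognize the singularity over $p=(0,0)$ as one of the three generic \emph{folded singularities} of an implicit differential equation, whose classification into folded saddle, folded node and folded focus is classical. First I would write the equation as $F(u,v,P)=A(u,v)+B(u,v)P+C(u,v)P^{2}=0$ with $P=dv/du$. Since the double eigendirection at the origin is $(1,0)$, we have $P=0$ there, and indeed $A(0,0)=-32q_{31}=0$, $B(0,0)=0$, $C(0,0)=32q_{13}\neq 0$, so $P=0$ is the double root of $F$ at the origin. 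The surface $M=F^{-1}(0)\subset\mathbb{R}^{3}_{(u,v,P)}$ carries the Lie--Cartan vector field
\begin{equation*}
X=\Bigl(F_{P},\,P\,F_{P},\,-(F_{u}+P\,F_{v})\Bigr),
\end{equation*}
whose integral curves project by $\pi(u,v,P)=(u,v)$ onto the affine curvature lines, the criminant $\{F=F_{P}=0\}$ being carried onto the discriminant $\delta=0$.

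Next I would locate the singularity of $X$ over the origin and exploit the standard dichotomy: over a point of the discriminant the criminant point is an equilibrium of $X$ precisely when the unique solution direction is tangent to the discriminant there. In the transversal situation of Lemma \ref{lem:cusp} the field $X$ is regular over the fold, producing the cuspidal folding of the solutions; under the tangency hypothesis $2q_{03}q_{30}^{2}-7q_{22}q_{30}+2q_{41}=0$ of Lemma \ref{lem:double}, the component $-(F_{u}+P F_{v})(0,0,0)=-A_{u}(0,0)$ vanishes and $(0,0,0)$ becomes an equilibrium of $X$. Because $F_{v}(0,0,0)=A_{v}(0,0)=16\,(q_{03}q_{40}+4q_{13}q_{30}-2q_{32})\neq 0$ by Lemma \ref{lem:double}, the surface $M$ is a graph $v=v(u,P)$, so $(u,P)$ are local coordinates on $M$ and $X$ pushes forward to
\begin{equation*}
\widetilde X:\qquad \dot u=F_{P},\qquad \dot P=-(F_{u}+P\,F_{v}),
\end{equation*}
with $v$ eliminated through $v=v(u,P)$.

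I would then linearize $\widetilde X$ at $(u,P)=(0,0)$ and read off the type from its $2\times 2$ Jacobian $J_{0}$: the sign of $\det J_{0}$ together with the sign of $(\operatorname{tr}J_{0})^{2}-4\det J_{0}$ separates the cases, real eigenvalues of opposite sign giving the folded saddle, complex eigenvalues the folded focus, and real eigenvalues of the same sign the folded node. These are exactly the linear types of the lifts of the three stated normal forms, as one checks directly: for $dv^{2}-2u\,dudv-v\,du^{2}=0$ one finds eigenvalues $-2,3$; for the focus model the discriminant is negative; for the node model it is positive with $\det J_{0}>0$. The hypotheses of Lemma \ref{lem:double} guarantee that $(0,0,0)$ is a \emph{hyperbolic} equilibrium, the quadratic-contact condition excluding the degenerate boundary, so by the Hartman--Grobman theorem $\widetilde X$ is locally topologically equivalent to $J_{0}$, hence to the corresponding model field on $M$. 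Projecting by the fold $\pi$ transports this equivalence to the plane and yields the three configurations of Fig. \ref{fig16}, the explicit homeomorphism being assembled by the method of canonical regions \cite{GS-1982}. Since the entries of $J_{0}$, and therefore $\det J_{0}$ and the eigenvalue discriminant, are polynomial in the coefficients $q_{ij}$ up to order six (the terms $q_{51},q_{42},\dots$ entering through $A,B,C$ and through $v(u,P)$), the type is determined by the $6$-jet of $h$.

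The main obstacle is the explicit evaluation of $J_{0}$ in terms of the $q_{ij}$ and the verification that its determinant and discriminant are exactly the quantities splitting the parameter space into the saddle, node and focus regions, together with checking that the genericity conditions of Lemma \ref{lem:double} coincide with hyperbolicity and with the contact condition separating node from focus. A secondary technical point is the careful transfer of the topological equivalence across the fold $\pi$, which is two-to-one away from the criminant and glues the two sheets of $M$ along $\delta=0$; this is exactly where the quadratic-contact conclusion of Lemma \ref{lem:double} is needed, and it is handled by the canonical-region construction.
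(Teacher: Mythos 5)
Your proposal is correct and follows essentially the same route as the paper: both lift the equation to $\mathcal{F}(u,v,P)=A+BP+CP^2$, observe that the tangency condition of Lemma \ref{lem:double} makes $(0,0,0)$ an equilibrium of the Lie--Cartan field $\mathcal{X}=(\mathcal{F}_P,P\mathcal{F}_P,-(\mathcal{F}_u+P\mathcal{F}_v))$, classify it as a hyperbolic saddle, node or focus via the eigenvalues of the linearization (the paper computes them explicitly in terms of the $q_{ij}$, with $\Delta_\lambda<0$ giving the focus and the sign of $\lambda_1\lambda_2$ separating node from saddle), and transfer the equivalence to the plane by the method of canonical regions. Your reduction to the $2\times 2$ system in $(u,P)$ using $\mathcal{F}_v(0)\neq 0$ is just a cleaner way of extracting the two relevant eigenvalues that the paper reads off from $D\mathcal{X}(0)$ directly.
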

\begin{figure}[h]\label{fig:H}
\centering
	\includegraphics[scale=0.55]{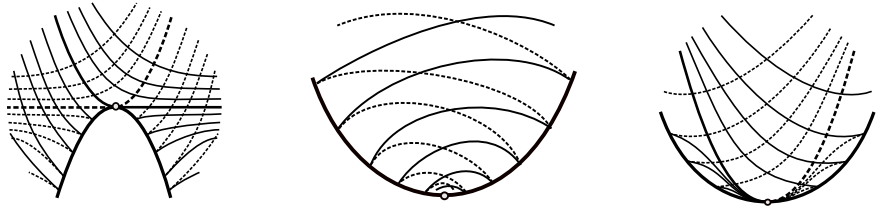}
	\caption{Affine curvature lines near a point with double eigenvalues of the affine shape operator. A folded saddle  (left), a folded foci (center) and folded node (right).}
	\label{fig16}
\end{figure}

\begin{proof} This result is well known in the literature, see \cite{livro_cidinha_farid_2015} and references therein. For convenience to the reader a sketch of proof is the following.
	%\subsection{The Lie Cartan field}
	Let $P=\frac{dv}{du}$ and
	\begin{equation*}
	\mathcal{F}(u,v,P)=A(u,v)+B(u,v)P+C(u,v)P^2.
	\end{equation*}
	Consider the Lie-Cartan vector field $\mathcal{X}(u,v,P)=\left(\mathcal{F}_P,P\mathcal{F}_P,-\left(\mathcal{F}_u+P\mathcal{F}_v\right)\right)$. We have $\mathcal{X}(0,0,0)=\left[0,0,0\right]$, thus $(0,0,0)$ is singular point of $\mathcal{X}$. The eigenvalues of  $D\mathcal{X}(0)$  are given by
	\begin{equation*}
	\lambda_i=-8\left(\left(q_{03}q_{40}+4q_{13}q_{30}-2a_{32}\right)+(-1)^i\sqrt{\Delta_\lambda}\right),\quad i=1,2,
	\end{equation*}
	where
	\begin{eqnarray*}
		\Delta_\lambda & = & q_{03}^2q_{40}^2+112q_{03}q_{13}q_{30}q_{40}+160q_{13}^2q_{30}^2-4q_{03}q_{32}q_{40}-192q_{13}q_{22}q_{40}-\\
		& - & 192q_{13}q_{30}q_{32}+32q_{13}q_{51}+4q_{32}^2.
	\end{eqnarray*}
	If $\Delta_\lambda<0$, the projection of the integral curves of $\mathcal{X}$ has a folded hyperbolic focus at the origin (see Fig. \ref{fig16}, center).  When $\Delta_\lambda>0$ we have
	\begin{equation*}
	\lambda_1\lambda_2=-512q_{13}\left(13q_{03}q_{30}q_{40}+18q_{13}q_{30}^2-24q_{22}q_{40}-22q_{30}q_{32}+4q_{51}\right)
	\end{equation*}
	and a we have a  hyperbolic folded  node ($	\lambda_1\lambda_2 >0$) or a hyperbolic folded saddle ($	\lambda_1\lambda_2<0 $). See   Fig. \ref{fig16} right and left, respectively.
	
	The construction of the  topological equivalence can be performed using the method of canonical regions, see  \cite{Garcia1999} and  \cite{GS-1982}. \qed
\end{proof}

%
%
%%%%%%%%%%%%%%%%%%%%%%%%%%%%%%%%%%%%%%%%%%%%%%%%%%%%%%%%%%%%%%%%%%%%%%%%%%%
%%%%  Affine curvature lines near Euclidean parabolic set
%%%%%%%%%%%%%%%%%%%%%%%%%%%%%%%%%%%%%%%%%%%%%%%%%%%%%%%%%%%%%%%%%%%%%%%%%%%
%

\section{Affine principal curvature lines near   the  parabolic set}\label{section5}
In this section we study  the local behavior of  affine curvature lines in a small neighborhood of the parabolic set, which is assumed to be a regular curve.

\subsection{Differential equation of affine principal lines in a Monge chart}

In this subsection, we obtain the differential equation of the affine principal lines in a Monge chart.\\

Consider a smooth surface $S$ parametrized in a Monge chart $X(u,v)=(u,v,h(u,v))$. Recall  that   the affine  first   fundamental form is given by:

%\textcolor{blue}{  {\Huge conferir eq. abaixo}}
%
\begin{equation}
 I_a= \frac{1}{|h_{uu}h_{vv}-h{uv}^2|^{\frac{1}{4}}}\left(h_{uu}du^2 + 2  h_{uv} du dv+ h_{vv} dv^2\right).
\end{equation}
The affine normal vector $\xi$, defined by equations \eqref{Conormal_normal} and \eqref{eq:xicarta}, is given by $\xi=\left(\xi_1,\xi_2,\xi_3\right)$, where
\begin{eqnarray*}
\xi_1 & = & -\frac{1}{4}\frac{1}{\left(h_{uu}h_{vv}-h_{uv}^2\right)^{\frac{7}{4}}}\left( h_{uuu}h_{vv}^2+h_{uu}h_{vv}h_{uvv}-3h_{uv}h_{vv}h_{uuv} \right. \\
      & - & \left.h_{uu}h_{uv}h_{vvv}+2h_{uv}^2h_{uvv} \right),
\end{eqnarray*}
\begin{eqnarray*}
\xi_2 & = & -\frac{1}{4}\frac{1}{\left(  h_{uu}h_{vv}-h_{uv}^2\right)^{\frac{7}{4}}}\left( h_{uu}^2h_{uvv}+ h_{uu}h_{vv}h_{uuv}  -3 h_{uv} h_{uu }h_{uvv} \right. \\  
&-&\left. h_{uv}h_{vv}h_{uuu} +2h_{uv}^2 h_{uuv}   \right),
\end{eqnarray*}
\begin{eqnarray*}
\xi_3 & = & -\frac{1}{4}\frac{1}{\left(h_{uu}h_{vv}-h_{uv}^2\right)^{\frac{7}{4}}}\left(h_{u}h_{vv}^2h_{uuu}+h_{u}h_{uu}h_{vv}h_{vvu}-3h_{u}h_{uv}h_{vv}h_{uuv}-\right. \\
      & - & h_{v}h_{uv}h_{vv}h_{uuu}-3h_{v}h_{uu}h_{uv}h_{uvv}+2h_{v}h_{uv}^2h_{uuv}+h_{v}h_{uu}h_{vv}h_{uuv}+ \\
      & + & \left.h_{v}h_{uu}^2h_{vvv}-h_{u}h_{uu}h_{uv}h_{vvv}+2h_{u}h_{uv}^2h_{uvv}-4\left(h_{uu}h_{vv}-h_{uv}^2\right)^2\right).
\end{eqnarray*}
The coefficients of the third affine  fundamental form $ III_a=l du^2+2mdudv+ndv^2$, defined by equation \eqref{eq:xico},   are:
\begin{eqnarray*}
l & = & -\frac{1}{16}\frac{1}{\left(h_{uu}h_{vv}-h_{uv}^2\right)^{2}}\left(-4\left(h_{vv}h_{uuuu}-2h_{uv}h_{uuuv}\right)\left(h_{uu}h_{vv}-h_{uv}^2\right)\right.-\\
  & - & 4h_{uu}\left(h_{uu}h_{vv}-h_{uv}^2\right)h_{uuvv}+7h_{vv}^2h_{uuu}^2+3h_{uu}^2h_{uvv}^2+\\
	& + & (-28h_{uuv}h_{uv}h_{vv}+2(h_{uu}h_{vv}+8h_{uv}^2)h_{uvv}-4h_{vvv}h_{uu}h_{uv})h_{uuu}+\\
	& + & \left.12(h_{uu}h_{vv}+h_{uv}^2)h_{uuv}^2+4(h_{uu}^2h_{vvv}-6h_{uu}h_{uv}h_{uvv})h_{uuv}\right),
\end{eqnarray*}
\begin{eqnarray*}
m & = & -\frac{1}{16}\frac{1}{\left(h_{uu}h_{vv}-h_{uv}^2\right)^{2}}\left(-4\left(h_{vv}h_{uuuv}-2h_{uv}h_{uuvv}\right)\left(h_{uu}h_{vv}-h_{uv}^2\right)+\right.\\
  & + & (7h_{vv}^2h_{uuv}-10h_{uv}h_{vv}h_{uvv}+(-h_{uu}h_{vv}+4h_{uv}^2)h_{vvv})h_{uuu}- \\
  & - & 4\left(h_{uu}h_{vv}-h_{uv}^2\right)h_{uvvv}-18h_{uuv}^2h_{uv}h_{vv}+7h_{uvv}h_{vvv}h_{uu}^2+\\
	& + & \left.((15h_{uu}h_{vv}+24h_{uv}^2)h_{uvv}-10h_{uu}h_{uv}h_{vvv})h_{uuv}-18h_{uvv}^2h_{uu}h_{uv}\right),
\end{eqnarray*}
\begin{eqnarray*}
n & = & -\frac{1}{16}\frac{1}{\left(h_{uu}h_{vv}-h_{uv}^2\right)^{2}}\left(-4\left(h_{vv}h_{uuvv}-2h_{uv}h_{uvvv}\right)\left(h_{uu}h_{vv}-h_{uv}^2\right)-\right.\\
  & - & 4h_{uu}h_{vvvv}\left(h_{uu}h_{vv}-h_{uv}^2\right)+4(-h_{uv}h_{vv}h_{vvv}+h_{uvv}h_{vv}^2)h_{uuu}+\\
	& + & 3h_{uuv}^2h_{vv}^2+2(-12h_{uv}h_{vv}h_{uvv}+(h_{uu}h_{vv}+8h_{uv}^2)h_{vvv})h_{uuv}+\\
	& + & 12(h_{uu}h_{vv}+h_{uv}^2)h_{uvv}^2-28h_{uvv}h_{vvv}h_{uu}h_{uv}+7h_{vvv}^2h_{uu}^2.
\end{eqnarray*}
Thus, the equation of the  affine curvature lines \eqref{eq_curv_lines2} is given by
\begin{equation}\label{bde_parab}
A(u,v)du^2+B(u,v)dudv+C(u,v)dv^2=0,
\end{equation}
where
\begin{eqnarray*}
A(u,v) & = & \frac{1}{4}\frac{1}{\left(h_{uu}h_{vv}-h_{uv}^2\right)^2}\left(-4h_{uv}h_{vv}h_{uuuu}\left(h_{uu}h_{vv}-h_{uv}^2\right)+\right.\\
       & + & 4\left(h_{uu}h_{vv}-h_{uv}^2\right)\left(\left(h_{uu}h_{vv}+h_{uv}^2\right)h_{uuuv}-3h_{uu}h_{uv}h_{uuvv}\right)+\\
			 & + & 4h_{uu}^2h_{uvvv}\left(h_{uu}h_{vv}-h_{uv}^2\right)+7h_{uv}h_{vv}^2h_{uuu}^2+\\
			 & + & \left(-7h_{vv}h_{uuv}(h_{uu}h_{vv}+4h_{uv}^2)+4h_{uv}h_{uvv}(3h_{uu}h_{vv}+4h_{uv}^2)+\right.\\
			 & + & \left.h_{uu}h_{vvv}(h_{uu}h_{vv}-8h_{uv}^2)\right)h_{uuu}+6h_{uv}h_{uuv}^2(5h_{uu}h_{vv}+2h_{uv}^2)+\\
			 & + & (-3h_{uu}h_{uvv}(5h_{uu}h_{vv}+16h_{uv}^2)+14h_{uu}^2h_{uv}h_{vvv})h_{uuv}-\\
			 & - & \left.7h_{uu}^3h_{vvv}h_{uvv}+21h_{uu}^2h_{uv}h_{uvv}^2\right),
\end{eqnarray*}
\begin{eqnarray*}
B(u,v) & = & \frac{1}{4}\frac{1}{\left(h_{uu}h_{vv}-h_{uv}^2\right)^2}\left(-4h_{vv}^2h_{uuuu}\left(h_{uu}h_{vv}-h_{uv}^2\right)+\right.\\
       & + & 8h_{uv}\left(h_{vv}h_{uuuv}-h_{uu}h_{uvvv}\right)\left(h_{uu}h_{vv}-h_{uv}^2\right)+7h_{vv}^3h_{uuu}^2+\\
			 & + & 4h_{uu}^2h_{vvvv}\left(h_{uu}h_{vv}-h_{uv}^2\right)+3h_{vv}h_{uuv}^2(3h_{uu}h_{vv}+4h_{uv}^2)-\\
			 & - & 2h_{vv}(14h_{uv}h_{vv}h_{uuv}+h_{uvv}\left(h_{uu}h_{vv}-8h_{uv}^2\right))h_{uuu}+\\
			 & + & 2h_{uu}h_{uuv}h_{vvv}(h_{uu}h_{vv}-8h_{uv}^2)+28h_{uu}^2h_{uv}h_{vvv}h_{uvv}-\\
			 & - & \left.3h_{uu}h_{uvv}^2(3h_{uu}h_{vv}+4h_{uv}^2)-7h_{uu}^3h_{vvv}^2\right),
\end{eqnarray*}
\begin{eqnarray*}
C(u,v) & = & \frac{1}{4}\frac{1}{\left(h_{uu}h_{vv}-h_{uv}^2\right)^2}\left(-4h_{vv}^2h_{uuuv}\left(h_{uu}h_{vv}-h_{uv}^2\right)-\right.\\
       & - & 4\left(h_{uu}h_{vv}-h_{uv}^2\right)(h_{uvvv}\left(h_{uu}h_{vv}+2h_{uv}^2\right)-3h_{uv}h_{vv}h_{uuvv})+\\
			 & + & 4h_{uu}h_{uv}h_{vvvv}\left(h_{uu}h_{vv}-h_{uv}^2\right)-21h_{uv}h_{vv}^2h_{uuv}^2+\\
			 & + & h_{vv}h_{uuu}(7h_{vv}^2h_{uuv}-14h_{uv}h_{vv}h_{uvv}-h_{vvv}\left(h_{uu}h_{vv}-8h_{uv}^2\right))-\\
			 & - & h_{uv}h_{uvv}^2(5h_{uu}h_{vv}+2h_{uv}^2)+7h_{uu}h_{uvv}h_{vvv}(h_{uu}h_{vv}+4h_{uv}^2)-\\
			 & - & 7h_{uu}^2h_{uv}h_{vvv}^2+h_{uuv}\left(3h_{vv}h_{uvv}(5h_{uu}h_{vv}+16h_{uv}^2)-\right.\\
			 & - & \left.\left.4h_{uv}h_{vvv}(3h_{uu}h_{vv}+4h_{uv}^2)\right)\right).
\end{eqnarray*}
Note that \eqref{bde_parab} is a homogeneous equation, thus we can extend the binary differential equation of affine curvature lines  to the   parabolic set, defined by $ h_{uu}h_{vv}-h_{uv}^2 =0$, as the numerator of \eqref{bde_parab}, written as
\begin{equation}\label{numer_bde_parab}
\bar{A}(u,v)du^2+\bar{B}(u,v)dudv+\bar{C}(u,v)dv^2=0.
\end{equation}
The  parabolic set is  generically, see \cite[Chapter 6]{livro_cidinha_farid_2015},  formed of smooth curves and it induces a natural decomposition on a surface: the elliptic region (where $K_e>0$) and the hyperbolic region (where $K_e<0$) having the parabolic set as common boundary. %, see Fig. \ref{fig4}.\\

%%%%%%%%%%%%%%%%%%%%%%%%%%%%%%%%%%%%%%%%%%%%%%%%%%%%%%%%%%%%%%%%%%%%%%%%%%%%%%%%%%%%%%%%%%%%%%%%%%%%%%%%%%%%%%%%%%%

\subsection{ Affine principal curvature lines near an ordinary parabolic point}

The main goal of this subsection is to describe the local behavior of the affine principal lines near an ordinary parabolic points, i.e., when the double asymptotic direction given in the Monge chart by $(h_{vv},-h_{uv})=(-h_{uv},h_{uu})$ is transversal to the regular curve of parabolic points. We observe that, at such parabolic points, the Euclidean principal directions are both transversal to the parabolic curve. For affine principal lines we have the following.
\begin{proposition}\label{prop_pslca}
At an ordinary parabolic point, the  parabolic set is an affine curvature line.
\end{proposition}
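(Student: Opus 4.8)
The plan is to reduce the proposition to a pointwise statement: at every ordinary parabolic point the tangent line to the parabolic curve is one of the two principal directions of the regularized equation \eqref{numer_bde_parab}, whose coefficients are the numerators $\bar A,\bar B,\bar C$. Once this is verified along the whole parabolic curve, the curve is an integral curve of \eqref{numer_bde_parab} and hence an affine curvature line. Writing $W=h_{uu}h_{vv}-h_{uv}^2$, the parabolic set is $\{W=0\}$, assumed regular, and its tangent direction at a parabolic point is $(du:dv)=(W_v:-W_u)$, orthogonal to $\nabla W=(W_u,W_v)$. So everything comes down to checking that $(du,dv)=(W_v,-W_u)$ solves $\bar A\,du^2+\bar B\,du\,dv+\bar C\,dv^2=0$ on $W=0$.

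Rather than verify the associated polynomial identity modulo $W$ in full generality (the brute-force alternative, and the only real computational obstacle), I would use the affine invariance of the principal configuration and fix a convenient normal form at the parabolic point under study. By a graph-preserving affine change of coordinates we may place the point at the origin and arrange $h_{uu}(0)=1$ and $h_{uv}(0)=h_{vv}(0)=0$; then the double asymptotic direction $(-h_{uv}:h_{uu})$ is $\partial_v=(0:1)$, and a short computation gives $W_u(0)=h_{uvv}(0)$ and $W_v(0)=h_{vvv}(0)$. In these coordinates the ordinary hypothesis (transversality of the asymptotic direction to the parabolic curve) becomes precisely $W_v(0)=h_{vvv}(0)\neq 0$.

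The key step is to evaluate $\bar A,\bar B,\bar C$ at the origin in this normal form. Every surviving monomial in the expressions for $\bar A,\bar B,\bar C$ carries either a factor $W$ or a factor $h_{uv}$ or $h_{vv}$, so almost all terms drop out and one is left with
\begin{equation*}
\bar A(0)=-7h_{vvv}h_{uvv},\qquad \bar B(0)=-7h_{vvv}^2,\qquad \bar C(0)=0,
\end{equation*}
all derivatives evaluated at $0$. Hence at the origin the equation factors as
\begin{equation*}
\bar A\,du^2+\bar B\,du\,dv+\bar C\,dv^2=-7h_{vvv}\,du\,\bigl(h_{uvv}\,du+h_{vvv}\,dv\bigr),
\end{equation*}
and since $h_{vvv}(0)\neq 0$ its two roots are the asymptotic direction $(0:1)$ and the direction $(h_{vvv}:-h_{uvv})=(W_v:-W_u)$, which is exactly the tangent to the parabolic curve.

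Finally, since the parabolic point was arbitrary and the normal form was obtained by an affine change, under which the field of affine principal directions and therefore \eqref{numer_bde_parab} is covariant, the tangent to the parabolic curve is a principal direction at every ordinary parabolic point. Thus the parabolic set is an integral curve of the affine curvature line equation, which proves the proposition. As a byproduct the argument identifies the second principal direction at such points as the asymptotic direction, consistent with $\bar C(0)=0$. The normal form is chosen precisely so that the evaluation of the three coefficients collapses to the three monomials above, which is the heart of the computation.
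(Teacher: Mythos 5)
Your proof is correct, and it reaches the same core reduction as the paper --- show that the tangent direction $(W_v,-W_u)$ to the parabolic curve $\{W=h_{uu}h_{vv}-h_{uv}^2=0\}$ annihilates the regularized quadratic form $\bar A\,du^2+\bar B\,du\,dv+\bar C\,dv^2$ --- but you verify it by a genuinely different mechanism. The paper substitutes $(du_1,dv_1)=(-W_v,W_u)$ into \eqref{numer_bde_parab} for an arbitrary Monge chart and checks, by symbolic computation, that the resulting expression is a multiple of $W$, hence vanishes identically on the parabolic set. You instead normalize each individual ordinary parabolic point by an affine change so that $h_{uu}(0)=1$, $h_{uv}(0)=h_{vv}(0)=0$, observe that all but three monomials of $\bar A,\bar B,\bar C$ die at the origin (I checked: $\bar A(0)=-\tfrac74 h_{vvv}h_{uvv}$, $\bar B(0)=-\tfrac74 h_{vvv}^2$, $\bar C(0)=0$ with the paper's normalization, the overall factor $\tfrac14$ being irrelevant), and read off the factorization whose roots are the asymptotic direction and the parabolic tangent; affine covariance of the principal direction fields then transports the conclusion back. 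Your route trades the paper's computer-algebra divisibility identity for a one-point hand computation plus an invariance argument, which is more illuminating and also recovers, as a byproduct, the paper's later observation (from \eqref{numer_bde_parab_monge}) that the second principal direction at an ordinary parabolic point is the double asymptotic direction. The only place I would ask you to add a sentence is the covariance step: \eqref{numer_bde_parab} is a regularization, not literally a tensor, so you should note that at an ordinary parabolic point not all of $\bar A,\bar B,\bar C$ vanish (indeed $\bar B(0)\neq0$ since $h_{vvv}(0)\neq0$), so its solution directions there are the limits of the genuine affine principal directions from the non-parabolic side, and these limits are what the affine change of coordinates transports. This is minor and does not affect the validity of the argument.
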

\begin{proof}
We must prove that the parabolic set satisfies equation \eqref{numer_bde_parab}.
Let $p\in S$ be  parabolic point, i.e., $\left(h_{uu}h_{vv}-h_{uv}^2\right)(p)=0$. The tangent vector to the parabolic set at $p$ is given by $\left(du_1,dv_1\right)$ where
\begin{eqnarray*}
du_1 & = & -h_{uu}h_{vvv}-h_{uuv}h_{vv}+2h_{uv}h_{uvv}, \\
dv_1 & = & h_{uu}h_{uvv}+h_{uuu}h_{vv}-2h_{uuv}h_{uv}.
\end{eqnarray*}
Evaluating equation \eqref{numer_bde_parab} it follows, corroborated by algebraic symbolic calculations, that
$$ \bar{A}(u,v)du_1^2+\bar{B}(u,v)du_1dv_1+\bar{C}(u,v)dv_1^2= (h_{uu}h_{vv}-h_{uv}^2)H(h,dh,d^2h,d^3h,d^4h),$$
where $H$ is a polynomial in the partial derivatives of $h$ up to order four, which completes the proof.
\qed
\end{proof}
%
%The Proposition \ref{prop_pslca} guarantees that the parabolic set is an affine curvature line; this is, at each parabolic point, the tangent to the parabolic set is an affine principal direction.\\
Let now $X(u,v)=(u,v,h(u,v))$ be a parametrization around a parabolic point $p=X(0,0)$ where,
\begin{eqnarray}\label{eq-param-parab}
h(u,v) & = & \frac{k}{2}v^2+\frac{1}{6}\left(q_{30}u^3+3q_{21}u^2v+3q_{12}uv^2+q_{03}v^3\right)+\frac{1}{24}\left(q_{40}u^4+\right.\nonumber\\ 
       & + & \left.4q_{31}u^3v+6q_{22}u^2v^2+4q_{13}uv^3+q_{04}v^4\right)+\frac{1}{120}\left(q_{50}u^5+\right.\\
			 & + & \left.5q_{41}u^4v+10q_{32}u^3v^2+10q_{23}u^2v^3+5q_{14}uv^4+q_{05}v^5\right)+O(6).\nonumber
\end{eqnarray}

Replacing \eqref{eq-param-parab} in \eqref{numer_bde_parab} we obtain, at    the origin, that:
{\small  
\begin{equation}\label{numer_bde_parab_monge}
	\aligned 
\bar{A}(u,v)&du^2+\bar{B}(u,v)dudv+\bar{C}(u,v)dv^2=0,\\
\bar{A}(u,v)=&7\,{k}^{2}  q_{30}\, \left(  q_{12}\,  q_{30}-  q_{21}^{2}
\right) v
+O(2)
 \\
\bar{B}(u,v)=& 7\,{k}^{3} q_{30}^{2}+{k}^{2}q_{30}\, \left( 10\,k  q_{40}+19
\,q_{12}\,q_{30}-19\,q_{21}^{2} \right) u\\
-&   {k}^{2} \left( 4
\,kq_{21} \,q_{40}-14\,kq_{30}\,q_{31}-21\,q_{03}\,q_{30}^{2}+30\,q_{12}\,q_{21}\,q_{30}-9\,q_{21}^{3}
\right) v+O(2)
  \\
\bar{C}(u,v)= & 7\,{k}^{3}  q_{21}\,  q_{30}+{k}^{2} \left( 7\,k  q_{21}\,  q_{40}
+3\,k  q_{30}\, q_{31}-  q_{03}\,  q_{30}^{2}+22\, q_{12}\, 
 q_{21}\,  q_{30}-21\,  q_{21}^{3} \right) u \\
+&{k}^{2} \left( 3\,k  q_{21}\,  q_{31}+7\,k  q_{22}\,  q_{30}+20\,  q_{03}\,  q_{21}
\,  q_{30}-14\, q_{12}^{2}  q_{30}-6\,  q_{12}\,  q_{21}^{2}
\right) v+O(2)
&  
\endaligned
\end{equation}
}
In particular \eqref{numer_bde_parab_monge} evaluated at the origin gives $7k^3q_{30}(q_{30}du+q_{21}dv)dv=0$. 
The condition for $p$ to be an ordinary parabolic point is exactly $q_{30}\neq 0$.
Therefore we conclude that:
\begin{lemma}
At an ordinary parabolic point, the affine principal lines are transversal. The local behavior of the affine principal lines is as shown in Fig. \ref{fig:psapl}. The affine configuration is topologically equivalent to the topological normal form $ v\, du dv=0$ and it is completely determined by the 4-jet of the surface.
\end{lemma}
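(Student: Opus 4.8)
The plan is to analyze the $1$-jet at the origin of the regularized binary differential equation \eqref{numer_bde_parab_monge} and to show that it is of the simplest non-degenerate type: two transversal regular foliations, one of whose leaves is the parabolic curve. First I would record that, evaluated at $p=(0,0)$, the equation reduces to $7k^{3}q_{30}(q_{30}\,du+q_{21}\,dv)\,dv=0$. Since $p$ is a non-planar parabolic point we have $k=h_{vv}(0,0)\neq 0$, and the ordinary-parabolic hypothesis is exactly $q_{30}\neq 0$; hence the two root directions $dv=0$ and $q_{30}\,du+q_{21}\,dv=0$, i.e.\ $(1,0)$ and $(-q_{21},q_{30})$, satisfy $\det\neq 0$ and are linearly independent. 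This establishes that the two affine principal directions are transversal at $p$.

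Next I would upgrade transversality at $p$ to a statement valid on a whole neighborhood. The discriminant $\delta=\bar B^{2}-4\bar A\bar C$ satisfies $\delta(0,0)=\bar B(0,0)^{2}=49\,k^{6}q_{30}^{4}>0$, because $\bar A(0,0)=0$. By continuity $\delta>0$ near $p$, so $\sqrt{\delta}$ is smooth there and the binary quadratic form factors into two independent linear forms with coefficients as regular as $h$; equivalently, the equation defines two transversal regular foliations in a neighborhood of $p$, the two direction fields never coalescing. I would phrase the factorization through $\delta$ rather than by dividing out a leading coefficient, since $\bar C(0,0)=7k^{3}q_{21}q_{30}$ may vanish when $q_{21}=0$. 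This is precisely the generic (non-degenerate) situation for a binary differential equation.

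To identify the normal form I would invoke Proposition \ref{prop_pslca}: the parabolic set is an affine curvature line, hence a leaf of one of the two foliations just produced; its tangent at $p$ is $(-q_{21},q_{30})$, matching the factor $q_{30}\,du+q_{21}\,dv$. Choosing coordinates that straighten this foliation so that the parabolic curve becomes $\{v=0\}$ and its leaves become $\{v=\mathrm{const}\}$, and applying the flow-box theorem to the transversal family to make it $\{u=\mathrm{const}\}$, turns the configuration into two transversal regular foliations with the parabolic curve as the distinguished leaf $\{v=0\}$. This is exactly the model $v\,du\,dv=0$ of Fig.\ \ref{fig:psapl}; the ambient homeomorphism realizing the equivalence and preserving the parabolic set can be assembled by the method of canonical regions, as in the earlier propositions.

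Finally, for the jet determination I would observe that $\bar A,\bar B,\bar C$ are universal polynomial expressions in the partial derivatives of $h$ up to order four, so their values at the origin---and in particular the sign of $\delta(0,0)$, the only datum entering the argument---depend solely on the $4$-jet of $h$. As the governing conditions $q_{30}\neq 0$ and $\delta(0,0)>0$ are open, every surface sharing this $4$-jet has a topologically equivalent configuration, proving $4$-jet determinacy. The only mild obstacle is the possibly degenerate factorization when $\bar C(0,0)=0$, circumvented by the discriminant formulation above; the remainder is the standard straightening of two transversal regular foliations.
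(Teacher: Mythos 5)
Your proposal is correct and follows essentially the same route as the paper, which derives the lemma directly from the evaluation of \eqref{numer_bde_parab_monge} at the origin (giving the factorization $7k^{3}q_{30}(q_{30}\,du+q_{21}\,dv)\,dv=0$ and transversality from $q_{30}\neq 0$) together with Proposition \ref{prop_pslca}. Your added details — positivity of the discriminant near $p$, the matching of the factor $q_{30}\,du+q_{21}\,dv$ with the tangent $(-q_{21},q_{30})$ of the parabolic curve, and the straightening of the two transversal foliations — simply make explicit what the paper leaves implicit.
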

	\begin{figure}[H]
	\centering
	\includegraphics[width=4cm,clip]{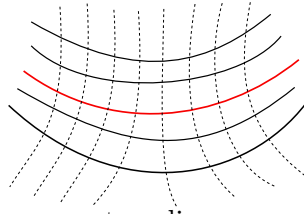}\\
	\caption{ \label{fig:psapl} Affine curvature lines near ordinary parabolic points. The parabolic arc is an affine principal line.}
\end{figure}
\subsection{ Affine principal curvature lines near a Gauss cusp point}

The main goal of this subsection is to describe the local behavior of affine principal lines near a Gauss cusp point.
The  origin ($p=X(0,0)$) is a Gauss cusp point when $q_{30}=0$ and $3q_{21}^2-kq_{40}\neq0$ (see \cite[Chapter 6]{livro_cidinha_farid_2015}).\\

In this case, 
the discriminant function $\delta=\bar{B}^2-4\bar{A}\bar{C} $ associated to equation \eqref{numer_bde_parab} is given by
{\small 
\begin{eqnarray*}
	\delta(u,v) & = & 16k^4q_{21}^2\left(4kq_{40}-9q_{21}^2\right)^2v^2+Q_{12}uv^2+Q_{03}v^3\\
	&-&16k^3q_{21}\left(40k^3q_{40}^3-554k^2q_{21}^2q_{40}^2+2211kq_{21}^4q_{40}-2736q_{21}^6\right)u^2v\\
	& + & 4k^2\left(4608q_{21}^8-6240kq_{21}^6q_{40}+3513k^2q_{21}^4q_{40}^2-948k^3q_{21}^2q_{40}^3+100k^4q_{40}^4\right)u^4\\
	& + & Q_{31}u^3v+Q_{22}u^2v^2+Q_{13}uv^3+Q_{04}v^4+O(5),
\end{eqnarray*}
}
where $Q_{ij}$ is function of $k,q_{21},q_{12},q_{03},q_{40},q_{31},q_{22},q_{13},q_{04}$ and the coefficients of order five and six.

The function $\delta$ has principal part (accordingly to the Newton's polygon) given by:
{\small 
$$\aligned \delta_p(u,v)=&\frac{1}{4}k^{2}\left(100k^{4}q_{40}^{4}-948k^{3}q_{21}^{2}q_{40}^{3}+3513k^{2}q_{21}^{4}q_{40}^{2}-6240kq_{21}^{6}q_{40}+\right.\\
+&\left.4608q_{21}^{8}\right)u^{4}+q_{21}k^{3}\left(-40k^{3}q_{40}^{3}+554k^{2}q_{21}^{2}q_{40}^{2}-2211k q_{21}^{4}q_{40}+\right.\\
+&\left.2736q_{21}^{6}\right)u^{2}v+q_{21}^{2}{k}^{4}\left(9q_{21}^{2}-4kq_{40}\right)^{2}v^{2}.\endaligned
$$
}
It is worthwhile to mention that $\delta_p$ does not depend on $q_{12},q_{03},q_{31},q_{22},q_{13}$ and $q_{04}$.

We can check using symbolic algebraic manipulators that the determinant of the Hessian of $\delta_P(u,v)=\delta_p(\sqrt{u},v)$ is 
$$
det(Hess(\delta_P)(0,0)= 2^{14} 7^3 k^6q_{21}^4(kq_{40}-4q_{21}^2)(kq_{40}-3q_{21}^2)^4.
$$
Thus $\delta$ has a $A^{\pm}_{3}$ singularity at the origin if, and only if, 
\begin{equation}\label{eq:A3pm}
9q_{21}^2-4kq_{40}\neq0 \quad\text{and}\quad k^6q_{21}^4(kq_{40}-4q_{21}^2)(kq_{40}-3q_{21}^2)^4\neq 0,
\end{equation}
(for more details about $A_k^{\pm}$ singularities see \cite[Chapter 11]{Arnold1985}).\\
In fact, $\delta$ has a $A^{+}_{3}$ singularity if $kq_{40}-4q_{21}^2>0$ and $A^{-}_{3}$ singularity if $kq_{40}-4q_{21}^2<0$.
%\martin{As condicoes para $\delta$ ter sing $A_3$ na origem sao:\\ rank$(Hess(\delta(u,v))_{(0,0)})=1$ e rank$(Hess(\delta_p(\sqrt{u},v))_{(0,0)})=2$}\\
%
%\ron{(quer dizer indice?? Esta parecendo que tem um problema de sinal aqui. Favor conferir...)}\ron{02 de abril: eu conferi e achei a discrepancia de sinal}
%
%\martin{Nao \'e indice. A primeira afirmacao garante que a parte de grau dos da expressao nao \'e nula (corank$\delta$=1). A segunda \'e que a expressao $Av^2+Bu^2v+u^4$ nao fatora como binomio quadrado perfeito.}\\
%
%\martin{Nao achei discrepancia de sinal, verifiquei o sinal de $\det(Hess(\delta_p(\sqrt{u},v))_{(0,0)})$}
%
Equation \eqref{eq:A3pm} means that $\delta_P(u,v)$ is a nondegenerate quadratic form.
% of index 0, 1, or 2. 

When $\delta$ has a $A^{\pm}_{3}$ at the origin, by composition of diffeomorphism in the source, $\delta$ can be rewritten locally as $v^2\pm u^4$, see \cite[Chapter 11, page 188]{Arnold1985}. When $\delta$ has a $A^{-}_3$ singularity at the origin, the double $\xi$-direction set is locally homeomorphic to $v^2-u^4=0$,  a pair of parabolas. If $\delta$ has an $A^{+}_3$ singularity at the origin, the double $\xi$-direction set is locally homeomorphic to $v^2+u^4=0$, an isolated point.

%\end{proof}
%
\begin{remark}\label{rem_deg}
	\begin{enumerate}
		\item Note that if $kq_{21}\left(4kq_{40}-9q_{21}^2\right)=0$, then $\delta$ has a singularity of corank 2 at the origin. 
		\item If $kq_{40}-3q_{21}^2=0$, $p$ is more degenerate than a Gauss cusp point. 
		\item Finally, if $kq_{21}\left(4kq_{40}-9q_{21}^2\right)\neq 0$ and $kq_{40}-4q_{21}^2=0$, then $\delta$ has a singularity of type $A_k$, $k>3$.
	\end{enumerate}
\end{remark}

When $\delta$ has a $A^{-}_3$ singularity at the origin, the curves that form the double $\xi$-direction set are given by $v=\alpha_i u^2+O(3)$, $i=1,2,$ where $\alpha_i$ are the roots of the quadratic equation $P_{22} \alpha^2+P_{12}\alpha+P_{11}=0$, where 
\begin{eqnarray*}
P_{22}  & = & k^4q_{21}^2\left(4kq_{40}-9q_{21}^2\right)^2, \\
P_{12}  & = & -k^3q_{21}\left(40k^3q_{40}^3-554k^2q_{21}^2q_{40}^2+2211kq_{21}^4q_{40}-2736q_{21}^6\right),\\
P_{11}& = & \frac{1}{4}k^2\left(100k^4q_{40}^4-948k^3q_{21}^2q_{40}^3+3513k^2q_{21}^4q_{40}^2-6240kq_{21}^6q_{40}+\right.\\
    & + & \left.4608q_{21}^8\right).
\end{eqnarray*}

In fact, $\delta_p(u,\alpha_i u^2)=0 $ and  $\delta(u,\alpha_i u^2+\cdots)=O(5). $

\begin{proposition}\label{aff_curv-lin_cusp_G}
The differential equation of the affine curvature lines near a Gauss cusp point can be reduced to the following normal forms up to coefficients of higher order. \\
\begin{equation}
\label{eq:normal_form_pa}\aligned 
-u^3du^2+& 2\left(b_{01}v+\bar{b}_{20}u^2 %+\hat{b}_{30}u^3
\right)dudv+udv^2=0,\;\; \text{\rm if } \;\; kq_{40}-4q_{21}^2>0 ;\\
 u^3du^2+&2\left(b_{01}v+b_{20}u^2%+\tilde{b}_{30}u^3
 \right)dudv+udv^2=0,\;\; \text{\rm if } \;\;  kq_{40}-4q_{21}^2<0. 
 \endaligned
\end{equation}

where $b_{01}(q_{21},q_{40})=-\frac{1}{14}\frac{9q_{21}^2-4q_{40}k}{3q_{21}^2-q_{40}k}\ne 0$,
\begin{equation*}	\aligned
b_{20}(q_{21},q_{40}) 
%&-\frac{\sqrt{7}}{196}\frac{20\left(kq_{40}-\frac{39}{8}q_{21}^2\right)^2-\frac{1029}{16}q_{21}^4}{q_{21}\left(3q_{21}^2-q_{40}k\right)\sqrt{4q_{21}^2-q_{40}k}}\\
=&-\frac{\sqrt{2}}{4}\frac{4b_{01}^2+13b_{01}+4}{\sqrt{\left(2b_{01}+1\right)\left(7b_{01}+2\right)}}.
	\endaligned  
\end{equation*}
\text{and}
\begin{equation*}
	\aligned
\bar{b}_{20}(q_{21},q_{40}) 
%&-\frac{\sqrt{7}}{196}\frac{20\left(kq_{40}-\frac{39}{8}q_{21}^2\right)^2-\frac{1029}{16}q_{21}^4}{q_{21}\left(3q_{21}^2-q_{40}k\right)\sqrt{-4q_{21}^2+q_{40}k}}.\\ 
=&-\frac{\sqrt{2}}{4}\frac{4b_{01}^2+13b_{01}+4}{\sqrt{-\left(2b_{01}+1\right)\left(7b_{01}+2\right)}}.
\endaligned
\end{equation*}
\end{proposition}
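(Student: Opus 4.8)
The plan is to reduce the binary differential equation \eqref{numer_bde_parab} to its quasi-homogeneous principal part and to normalize only that part, invoking the finite determinacy results of \cite{livro_cidinha_farid_2015} so that terms of higher weight do not alter the local model. First I would substitute the Gauss cusp conditions $q_{30}=0$ and $3q_{21}^2-kq_{40}\neq 0$ (together with the implicit $q_{21}\neq 0$) into the coefficients $\bar A,\bar B,\bar C$ and expand them at the origin using \eqref{numer_bde_parab_monge} and its higher-order continuation. The organizing principle is the weight assignment $\mathrm{wt}(u)=1$, $\mathrm{wt}(v)=2$ dictated by the $A_3^{\pm}$ structure $\delta\sim v^2\pm u^4$: since $du^2,dudv,dv^2$ then carry weights $2,3,4$, a weight-$5$ equation must have $\bar C$ of weight $1$, $\bar B$ of weight $2$ and $\bar A$ of weight $3$. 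I would check that the vanishing of the weight-$2$ and weight-$3$ parts of $\delta$, already visible in the displayed expansion of $\delta$ whose lowest terms $v^2,u^2v,u^4$ are all of weight $4$, forces the $u^2$-coefficient of $\bar A$ and the $u$-coefficient of $\bar B$ to vanish; hence the equation is genuinely weight-$5$ quasi-homogeneous to leading order.

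Next I would extract the principal part. Its coefficients are $\bar C_{(1)}=c_1u$, $\bar B_{(2)}=\beta_1 v+\beta_2 u^2$ and $\bar A_{(3)}=a_1u^3+a_2uv$, where, up to a common nonzero factor $\kappa$, one has $c_1=\kappa\,7q_{21}(kq_{40}-3q_{21}^2)\neq 0$ and $\beta_1=\kappa\,q_{21}(9q_{21}^2-4kq_{40})$. The discriminant of this principal part reproduces $\delta_p$, and the sign of $kq_{40}-4q_{21}^2$ selects the $A_3^+$ or $A_3^-$ case. To normalize I would apply, in order: a weight-preserving shear $v\mapsto v+\gamma u^2$ chosen to kill the $uv$-term of $\bar A$, turning its weight-$3$ part into a pure $\pm u^3$; the scalings $u\mapsto\lambda u$, $v\mapsto\mu v$; and an overall constant multiple of the form. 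Imposing $\bar C=u$ and $\bar A=\pm u^3$ then fixes the scalings through $\lambda^4/\mu^2=\pm c_1/a_1'$, where $a_1'$ is the $u^3$-coefficient after the shear, and the sign $\pm$ is exactly that of $kq_{40}-4q_{21}^2$, matching the two displayed normal forms.

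The modulus $b_{01}$ emerges as the scaling-invariant ratio of the $v$-coefficient of $\bar B$ to the $u$-coefficient of $\bar C$: the factor $\kappa$ and the $2$ in $\bar B=2(b_{01}v+\cdots)$ cancel, giving $b_{01}=\beta_1/(2c_1)=-\tfrac{1}{14}\tfrac{9q_{21}^2-4kq_{40}}{3q_{21}^2-kq_{40}}$, which is nonzero precisely when $9q_{21}^2-4kq_{40}\neq 0$. For the $u^2$-coefficient of $\bar B$, the relation $(\lambda^2/\mu)^2=\pm c_1/a_1'$ forces $\lambda^2/\mu$ to be a square root, and this is the source of the radical in $b_{20},\bar b_{20}$. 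Combining the shear parameter $\gamma$ with the surviving $u^2$-term and re-expressing everything through $b_{01}$ via the identities $2b_{01}+1=\tfrac{3(4q_{21}^2-kq_{40})}{7(3q_{21}^2-kq_{40})}$ and $7b_{01}+2=\tfrac{3q_{21}^2}{2(3q_{21}^2-kq_{40})}$ should yield the stated closed forms; the sign inside the radical is governed by $(2b_{01}+1)(7b_{01}+2)$, whose sign is that of $4q_{21}^2-kq_{40}$, so that $b_{20}$ is real in the $A_3^-$ case and $\bar b_{20}$ in the $A_3^+$ case, consistent with \eqref{eq:normal_form_pa}.

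I expect the main obstacle to be the size of the computation rather than any conceptual difficulty: obtaining the weight-$3$ part of $\bar A$ and the weight-$2$ part of $\bar B$ requires the Taylor coefficients of the already lengthy $\bar A,\bar B,\bar C$ up to degree three, hence the $5$- and $6$-jet of $h$, and tracking these through the shear and the two scalings before they collapse to the claimed rational-plus-radical expressions in $b_{01}$ is best carried out with a symbolic algebra system. A secondary point needing care is verifying that the shear normalizing $\bar A$ introduces no spurious lower-weight terms into $\bar B$ or $\bar C$ that would spoil the weight-$5$ homogeneity; this is precisely where the vanishing of the low-weight part of $\delta$ established in the first step is used decisively.
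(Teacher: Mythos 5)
Your proposal is essentially correct and reaches the stated coefficients by a route that is recognizably different in execution from the paper's. The paper (see the Appendix) follows Tari's jet-by-jet scheme: a polynomial change of coordinates of degree $\le 2$ together with multiplication by $1+\gamma_1U+\gamma_2V$ to force the $dv^2$-coefficient to be $U$ and push the $du^2$-coefficient to order $3$, then a degree-$3$ change and multiplication by $1+\gamma_{20}u^2+\gamma_{11}uv+\gamma_{02}v^2$ obtained by solving linear homological equations, and finally a scaling (the tenth root $\beta_2^{\pm}$) that normalizes $A_{30}$ to $\pm1$ and, substituted into \eqref{eq_beta20}, produces $b_{20}$ and $\bar b_{20}$. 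You instead isolate the quasi-homogeneous principal part for the weights $\mathrm{wt}(u)=1$, $\mathrm{wt}(v)=2$ and normalize only it by a shear $v\mapsto v+\gamma u^2$ (killing the $uv$-term of $\bar A$, which requires $b_{01}\neq0$) followed by scalings and an overall constant factor. Your identification of the modulus, $b_{01}=\beta_1/(2c_1)$ with $\beta_1=k^2q_{21}(9q_{21}^2-4kq_{40})$ and $c_1=7k^2q_{21}(kq_{40}-3q_{21}^2)$, reproduces the paper's $b_{01}$ exactly, your identities for $2b_{01}+1$ and $7b_{01}+2$ are correct, and the mechanism you give for the radical (the constraint $(\lambda^2/\mu)^2=\pm c_1/a_1'$) is the right explanation of why the sign of $4q_{21}^2-kq_{40}$ decides which of $b_{20}$, $\bar b_{20}$ is real. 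What your route buys is conceptual transparency: it makes visible why only $k,q_{21},q_{40}$ survive in the normal form and why the weighted blow-up of Theorem \ref{Thm_confg_princ_affins} is the natural next step. What the paper's route buys is a normal form that is clean to a fixed polynomial degree, not merely to a fixed weight.

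Two caveats. First, your inference that the vanishing of the weight-$2$ and weight-$3$ parts of $\delta=\bar B^2-4\bar A\bar C$ forces the individual coefficients (the $u$- and $u^2$-coefficients of $\bar A$, the $u$-coefficient of $\bar B$) to vanish is not airtight as stated — a discriminant can vanish to low weight without its constituents doing so; these vanishings should instead be read off directly from \eqref{numer_bde_parab_monge} with $q_{30}=0$, where every offending coefficient carries a factor of $q_{30}$. Second, your normalization leaves untouched the terms of higher weight but low polynomial degree ($a_{11}uv$ aside, e.g.\ $a_{02}v^2$ in $\bar A$ and $c_{01}v$ in $\bar C$), whereas the paper's homological-equation step removes them; this is harmless for the weighted blow-up that follows, but you should say explicitly that "higher order" in \eqref{eq:normal_form_pa} is to be read as higher weight, or else add the extra (weight-preserving) changes of coordinates that eliminate those terms so as to match the paper's statement literally.
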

\begin{proof}
See Appendix.	\qed
\end{proof}

Note that $b_{01}$ and $b_{20} (\bar{b}_{20})$ do not depend on $q_{12},q_{03},q_{31},q_{22},q_{13}$ and $q_{04}$.
%
%\martin{Nao podemos unificar as notacoes para $b_{ij}$ e $\bar{b}_{ij}$ pois correspondem a casos diferentes}
\begin{remark}
The BDE's with function discriminant having an $A_3^{\pm}$ singularity  was studied in \cite{Tari2007} where the author was motived by the differential geometry in  the neighborhood of a cross-cap (also called Whitney umbrella). The behavior of the Euclidean curvature lines near a geometric cross-cap, was also studied in \cite{Garcia2000}.
\end{remark}
The next is the main result of this section.\\ %\martin{Aqui comecei a mexer}
\begin{theorem}\label{Thm_confg_princ_affins}
Let $X:\mathbb{R}^2\rightarrow\mathbb{R}^3$ be a local parametrization of the surface $S$ in a small neighborhood of a   Gauss cusp point. Consider $b_{01}$ as in the Proposition \ref{aff_curv-lin_cusp_G}. Then the   configuration of the affine curvature lines is locally topologically equivalent to the models listed below. 
\begin{enumerate}
	\item[1.] If $kq_{40}-4q_{21}^2>0$, the affine curvature lines are locally topologically equivalent to the model presented  in the Fig. \ref{fig7}. (In this case we have $-\frac{1}{2}<b_{01}<-\frac{2}{7}$.)
% ($R_1$). % Fig. \ref{fig9}).
		%
	The parabolic set is a regular curve and discriminant is an isolated point,   the origin.
	The binary differential equation  of the affine curvature lines  is topologically
	equivalent to
$$- u^3 du^2 - \frac 12 v\, du dv +udv^2=0.$$
%\martin{corregir forma normal}
	\begin{figure}[H]
		\centering
		\includegraphics[scale=0.60]{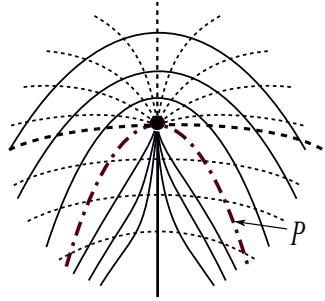}\\
		\caption{Affine curvature lines near   a Gauss cusp point  when the discriminant $\delta$ has an $A_3^{+}$ singularity at the origin. %The continuous and dashed bold curves are the invariant curves of the BDE. In particular,
			The curve plotted by dashes and points corresponds to the parabolic set. \label{fig7}}
		
	\end{figure}	
	\item[2.] When $kq_{40}-4q_{21}^2<0$ we have five distinct topological models:
	%	
	%\ron{martin conferir o item iii abaixo}
	%
		\begin{enumerate}
			\item[i)] \;If $b_{01}<-\frac{1}{4}\left(5+\sqrt{21}\right)$, Fig. \ref{fig8}, {\rm (}$R_1${\rm  )};
			
			\item[ii)]\; If $-\frac{1}{4}\left(5+\sqrt{21}\right)<b_{01}<-1$, Fig.\ref{fig8}, {\rm (}$R_2${\rm  )};
			
			\item[iii)]\; If $-1<b_{01}<-\frac{1}{2}$ or $-\frac{2}{7}<b_{01}< \frac{1}{4}\left( \sqrt{21}-5\right)$, Fig. \ref{fig8},  {\rm (}$R_3${\rm  )};
			\item[iv)]\; If $  \frac{1}{4}\left( \sqrt{21}-5\right)<b_{01}<0$, Fig. \ref{fig8},  {\rm (}$R_4${\rm  )}; 
			\item[v)] \; If $b_{01}>0$ and $b_{01}\neq\frac{1}{4}$, Fig. \ref{fig8},  {\rm (}$R_5${\rm  )}.
		\end{enumerate}
\end{enumerate}

The discriminant is formed by two regular curves having a quadratic contact at the origin.

The binary differential equation  of the affine curvature lines  is topologically
 equivalent to one of the following topological normal forms, i.e., the affine configuration is topologically equivalent to one of the normal forms below.
%
%\begin{enumerate}
%
%\item[$\left(R_1\right)$] $u^3du^2- \left(6v+\frac{\sqrt{190}}{190}u^2\right)dudv+udv^2=0$.
%
%\item[$\left(R_2\right)$] $u^3du^2+ \left(-4v+\frac{\sqrt{2}}{2}u^2\right)dudv+udv^2=0$.
%
%\item[$\left(R_3\right)$] $u^3du^2+ \left(-\frac 32 v+\frac{7}{13}\sqrt{13}u^2\right)dudv+udv^2=0.$.
%
%\item[$\left(R_4\right)$] $u^3du^2- \left(\frac{1}{10}v+\frac{56}{165}\sqrt{33}u^2\right)dudv+udv^2=0.$.
%
%\item[$\left(R_4\right)$] $u^3du^2+ \left(v-\frac{23}{44}\sqrt{22}u^2\right)dudv+udv^2=0$.
%
%\end{enumerate}

 %{\color{red} : conferir as formas normais abaixo}\\
% \martin{corregir formas normais}
%
%\begin{eqnarray*}
%	\left(R_1\right) &   & u^3du^2- 5vdudv+udv^2=0. \\ 
%	\left(R_2\right) &   & u^3du^2-3 \left(v+u^2\right)dudv+udv^2=0. \\ 
%	\left(R_3\right) &   & u^3du^2+\left(-v+2u^2\right)dudv+udv^2=0. \\ 
%	\left(R_4\right) &   & u^3du^2+\left(-v+\frac{47}{32}u^2\right)dudv+udv^2=0. \\
%	\left(R_5\right) &   & u^3du^2+ \left(v+\frac{79}{32}u^2\right)dudv+udv^2=0
%\end{eqnarray*}
 
%	\begin{eqnarray*}
%	\left(R_1\right) &   & u^3du^2-\left( 6v+\frac{\sqrt{190}}{190}u^2 \right)dudv+udv^2=0. \\ 
%	\left(R_2\right) &   & u^3du^2+\left( -4v+\frac{\sqrt{2}}{2}u^2 \right)dudv+udv^2=0. \\ 
%	\left(R_3\right) &   & u^3du^2+\left(-\frac{3}{2}v+\frac{7}{13} \sqrt{13}u^2\right)dudv+udv^2=0. \\ 
%	\left(R_4\right) &   & u^3du^2-\left(\frac{1}{10}v+\frac{56}{165}\sqrt{33}u^2\right)dudv+udv^2=0. \\
%	\left(R_5\right) &   & u^3du^2+\left(v-\frac{23}{44}\sqrt{22}u^2\right)dudv+udv^2=0
%	\end{eqnarray*}
	%
	\begin{eqnarray*}
		\left(R_1\right) &   & u^3du^2- 2\left(3v+  \frac{9}{250}  u^2\right)dudv+udv^2=0. \\ 
		\left(R_2\right) &   & u^3du^2+2 \left(-2v+  \frac{7}{20}  u^2\right)dudv+udv^2=0. \\ 
		\left(R_3\right) &   & u^3du^2+ 2\left(-\frac 34 v+  \frac{97}{100}u^2\right)dudv+udv^2=0. \\ 
		%\left(R_4\right) &   & u^3du^2- \left(\frac{1}{10}v+ \frac{39}{20} u^2\right)dudv+udv^2=0. \\ 
		\left(R_4\right) &   & u^3du^2- 2\left(\frac{1}{20}v+ \frac{19}{20}  u^2\right)dudv+udv^2=0. \\ 
		\left(R_5\right) &   & u^3du^2+ 2\left( v-  \frac{71}{50} u^2\right)dudv+udv^2=0. 
	\end{eqnarray*}
 
 The   topological models are completely determined by the 5-jet  of the surface.
\begin{figure}[H]
	\centering
	  \includegraphics[scale=0.67]{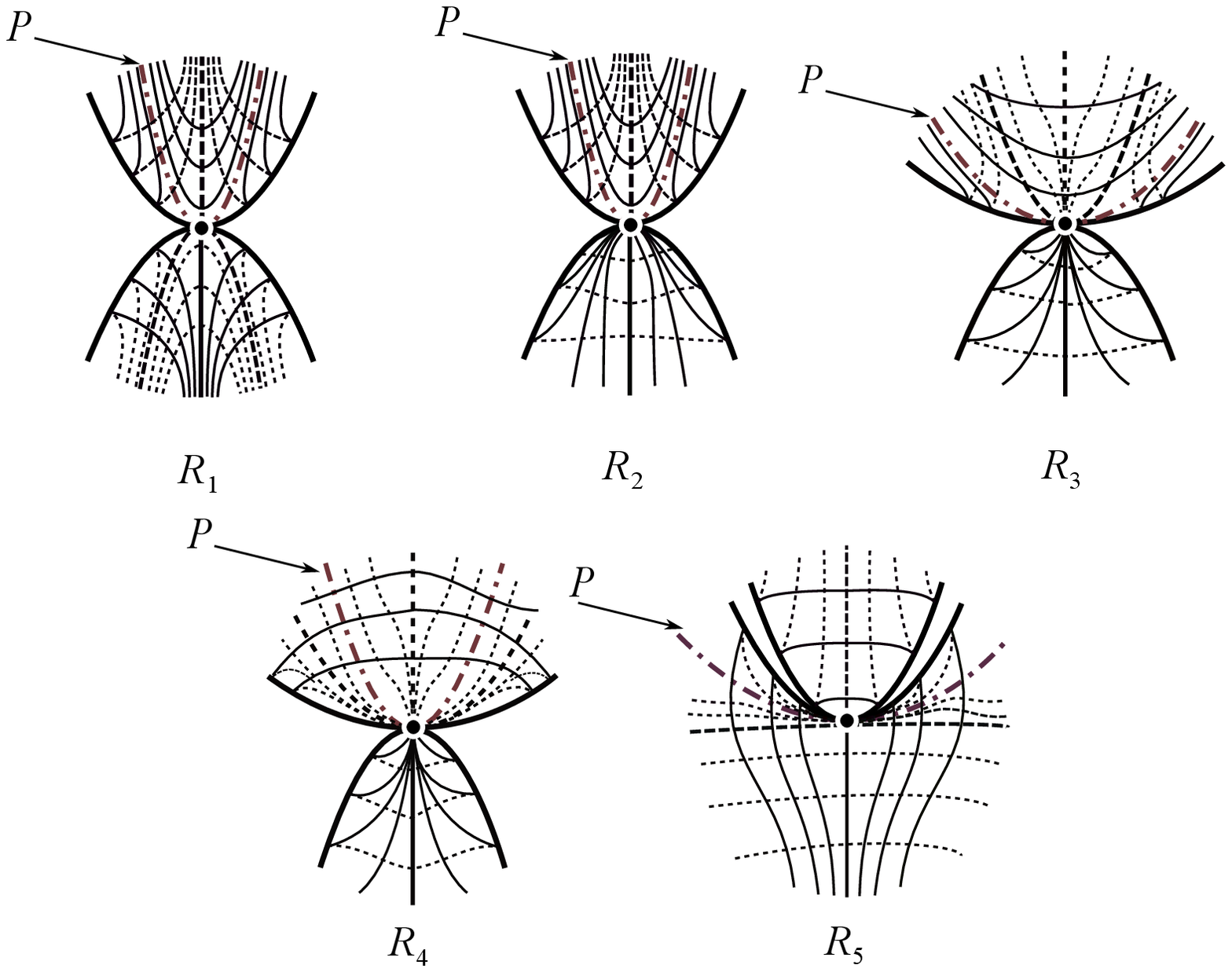}
\caption{Affine curvature lines near  a  Gauss cusp point  when the discriminant has a $A_3^{-}$ singularity at the origin. The curve plotted  by dashes and points corresponds to the parabolic set. \label{fig8}}
\end{figure}
%%
%The topological type of the BDE is constant in the open regions $(Ri)$, $i=1,\ldots,7$, in the $q_{21}q_{40}$-plane, see Fig. \ref{fig9}.
%%
%\begin{figure}[h!!!]
%\includegraphics[scale=0.5]{strat_plane_q31q40_bdes.pdf}\\
%\caption{Partition of the $q_{21}q_{40}$-plane.}\label{fig9}
%\end{figure}
%
\end{theorem}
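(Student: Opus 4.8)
The plan is to reduce everything to the single modulus $b_{01}$ via Proposition \ref{aff_curv-lin_cusp_G} and then to carry out the classification of binary differential equations whose discriminant has an $A_3^{\pm}$ singularity, following \cite{Tari2007}. By Proposition \ref{aff_curv-lin_cusp_G}, the $5$-jet of the BDE of affine curvature lines at a Gauss cusp point is affinely equivalent to one of the two normal forms in \eqref{eq:normal_form_pa}, in which the quadratic coefficient $b_{20}$ (resp. $\bar b_{20}$) is itself a function of $b_{01}$. Hence the local topological type is governed by the single parameter $b_{01}$, and the two families correspond to the sign of $kq_{40}-4q_{21}^2$, that is, to the $A_3^{+}$ (isolated-point discriminant) and $A_3^{-}$ (two parabolas with quadratic contact) cases already isolated in the discussion preceding Remark \ref{rem_deg}.

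First I would set $p=dv/du$ and write $F(u,v,p)=up^2+2(b_{01}v+b_{20}u^2)p\pm u^3$, lifting the BDE to the surface $M=F^{-1}(0)$ in the projective cotangent bundle and to the Lie-Cartan field $\mathcal X=F_p\,\partial_u+pF_p\,\partial_v-(F_u+pF_v)\,\partial_p$, exactly as in Propositions \ref{prop_A1+} and \ref{prop: sfn}. The essential difficulty, and the reason the plain Lie-Cartan recipe does not apply verbatim, is that at a Gauss cusp point the whole fibre $\{(0,0,p)\}$ lies simultaneously in $M$ and in $\{F_p=0\}$: the lift is totally degenerate over the origin. I would therefore resolve this by a (weighted) blow-up, transferring the analysis to the vector field induced on the exceptional fibre, where the relevant singular points become isolated. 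This is precisely the mechanism exploited in \cite{Tari2007} for the cross-cap, and the affine BDE fits that template once the normal form \eqref{eq:normal_form_pa} is in hand.

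Next I would locate the singular points on the exceptional fibre and compute the linear part of the induced field at each as a function of $b_{01}$. The qualitative type of the phase portrait, and hence the number of hyperbolic and parabolic sectors together with the separatrix structure after projection to the $(u,v)$-plane, changes exactly when an eigenvalue ratio crosses a critical value, when two singular points collide, or when a coefficient degenerates. I expect the transition values to be precisely $b_{01}\in\{-\tfrac14(5+\sqrt{21}),\,-1,\,-\tfrac12,\,-\tfrac27,\,\tfrac14(\sqrt{21}-5),\,0\}$ together with the excluded value $b_{01}=\tfrac14$: the two irrational values are the roots of $4b_{01}^2+10b_{01}+1=0$ (a focus/node-type discriminant condition analogous to $\Delta_\lambda$ in Proposition \ref{prop: sfn}), while $b_{01}=-\tfrac12$ and $b_{01}=-\tfrac27$ are the reality thresholds $(2b_{01}+1)(7b_{01}+2)=0$ that separate the $A_3^{+}$ case (forcing $-\tfrac12<b_{01}<-\tfrac27$, as recorded in item 1) from the $A_3^{-}$ case. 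Partitioning $(-\infty,0)\setminus\{\ldots\}$ and $(0,\infty)\setminus\{\tfrac14\}$ accordingly yields one model for $A_3^{+}$ and the five open ranges $R_1,\dots,R_5$ for $A_3^{-}$.

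Finally, for each interval I would exhibit the stated representative, checking that, for instance, the value of $b_{01}$ producing $R_1$ indeed satisfies $b_{01}<-\tfrac14(5+\sqrt{21})$, and similarly for $R_2,\dots,R_5$ and the $A_3^{+}$ model, and then construct the germ of homeomorphism realizing the topological equivalence by the method of canonical regions \cite{GS-1982}, matching separatrices and sectors one region at a time. Since $\delta_p$ and the coefficients $b_{01},b_{20}$ depend only on $k,q_{21},q_{40}$ and the fifth-order data, the configuration is determined by the $5$-jet. The main obstacle I anticipate is the blow-up analysis over the degenerate fibre and the bookkeeping that pins down the exact bifurcation values of $b_{01}$; once the induced field on the exceptional divisor is understood, identifying the five $A_3^{-}$ models and the single $A_3^{+}$ model is a finite, if delicate, case check.
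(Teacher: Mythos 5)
Your proposal follows essentially the same route as the paper: reduce to the normal form of Proposition \ref{aff_curv-lin_cusp_G}, perform a weighted blow-up adapted to the $(1,2)$-quasihomogeneity (the paper uses the weighted polar blow-up $\varphi(t,r)=(r\cos t, r^2\sin t)$ directly on the BDE, obtaining the pair of line fields $Y_i$ with six hyperbolic singularities on the exceptional fibre, rather than passing through the Lie--Cartan lift first), read off the bifurcation values of $b_{01}$ from the eigenvalue data, and conclude by canonical regions. Your identification of the transition values is consistent with the paper's computations --- the roots of $4b_{01}^2+10b_{01}+1=0$ and the factor $4b_{01}-1$ appear exactly in the paper's eigenvalue products \eqref{eq_prod_e-v_p1}--\eqref{eq_prod_e-v_p2}, and $(2b_{01}+1)(7b_{01}+2)$ gives the $A_3^{\pm}$ reality thresholds --- so the plan is correct and matches the paper's argument.
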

%\martin{A condicao $-\frac{1}{2}<b_{01}<-\frac{2}{7}$ e conseuenca de $kq_{40}-4q_{21}^2>0$. Corrigido}
%
\begin{proof}
We start considering the equation
\begin{equation}\label{BDE_N_F}
\pm u^3du^2+2(b_{01}v+b_{20}u^2)dudv+udv^2=0
\end{equation}
and naturally, we have two cases to analyze. Note that there are three invariant curves (separatrices), one has a vertical tangent, and  the remaining two have a horizontal tangent and are given by $v=\left(P_i^{\pm}\right)u^2+O(3)$, $i=1,2$, where $P_i^{\pm}$ are the roots of the equation 
\begin{equation}\label{eq_Par_Inv_A3+}
4\left(b_{01}+1\right)P^2+4b_{20}P\pm1=0.
\end{equation}
Consider the   weighted polar blowing-up in the equation \eqref{BDE_N_F} given by
\begin{equation*}
\varphi(t,r)=\left(r\cos(t),r^2\sin(t)\right)\quad\text{$r\geq0$ and $0<t<2\pi$}.
\end{equation*}
Note that via the application $\varphi$, for each angle $t_0$ corresponds the curve $v=\frac{\sin(t_0)}{\cos^2(t_0)}u^2+O(3)$, $t_0\neq\frac{\pi}{2},\frac{3\pi}{2}$ and the relative position of the curves depends on the sign of $\sin(t_0)$. We will use this correspondence conveniently.\\

%First case
\item[1.] When $kq_{40}-4q_{21}^2>0$, the new BDE (binary differential equation) in the variables $r,t$, after dividing by $r^3$ is given by
\begin{equation}\label{bde_r-t}
A(t)dr^2+2rB(t)drdt+r^2C(t)dt^2=0,
\end{equation}
where
\begin{eqnarray*}
A(t) & = & -\cos(t)\left(\cos^4(t)-4b_{20}\cos^2(t)\sin(t)-4(b_{01}+1)\sin^2(t)\right), \\
B(t) & = & \cos^2(t)\sin(t)(\cos^2(t)+2)+\left(\cos^2(t)b_{20}+\sin(t)b_{01}\right)(3\cos^2(t)-2),\\
C(t) & = & \cos(t)\left(\cos^4(t)-2b_{20}\cos^2(t)\sin(t)-2b_{01}\sin^2(t)\right).
\end{eqnarray*}
The singular points of the BDE \eqref{bde_r-t} are given by $r=0$ and the solutions of $A(t)=0$. Therefore the BDE \eqref{bde_r-t} has six hyperbolic singularities in the interval $[0,2\pi]$ which are given by $t=\frac{\pi}{2},\frac{3\pi}{2}$ and the roots of the quadratic equation 
\begin{equation}\label{eq_pi-}
P_i^{-}\sin^2(t)+\sin(t)-P^{-}_i=0,
\end{equation}
where $P_i^-$ is a solution of the equation \eqref{eq_Par_Inv_A3+}. Note that the equation \eqref{eq_pi-} is obtained directly via blow-down. Replacing in the equation $A(t)=0$, $P=\frac{\sin(t)}{\cos^2(t)}$ we obtain \eqref{eq_Par_Inv_A3+} for $\cos(t)\neq0$.\\
Consider now the vector fields defined by kernel of the differential forms
\begin{equation}\label{eq_VF_Y+}
Y_i=r\left(-B(t)+(-1)^i\sqrt{B^2(t)-A(t)C(t)}\right)dr+A(t)dt,\quad i=1,2.
\end{equation}
Outside their singularities, the vector fields $Y_i$ span the line fields associate to \eqref{bde_r-t} and they have the same singularities. We work in this case with $b_{01}$ and the expresion of $\bar{b}_{20}$ in terms of $b_{01}$, which is given by
\begin{equation}\label{eq_b20A3-}
\bar{b}_{20}=-\frac{\sqrt{2}}{4}\frac{4b_{01}^2+13b_{01}+4}{\sqrt{-\left(2b_{01}+1\right)\left(7b_{01}+2\right)}}.
\end{equation}
By equation \eqref{eq_b20A3-} we have directly $-\frac{1}{2}<b_{01}<-\frac{2}{7}$. The eigenvalues of the linearization of the vector field \eqref{eq_VF_Y+} at singular points are $-2B$ and $A_t=\frac{dA}{dt}$. Thus, $A$ and $A_t$ have simultaneous zeros if, and only if, $b_{01}\left(b_{01}+1\right)\left(b_{20}^2+b_{01}+1\right)$ $=0$ or
\begin{equation*}
8b_{20}^2\cos^2(t)+\cos^4(t)+4b_{01}\cos^2(t)+4b_{01}^2+4\cos^2(t)+8b_{01}+4=0,
\end{equation*}
which induce a natural stratification in the bifurcation plane $b_{01}b_{20}$. In particular, the restriction $-\frac{1}{2}<b_{01}<-\frac{2}{7}$ guarantees that the pairs $(b_{01},\bar{b}_{20}(b_{01}))$ are only in the strata $(-1,0)\times\mathbb{R}$ in the bifurcation plane. The singular points, under the hypothesis stated, are two nodes at $t=\frac{\pi}{2},\frac{3\pi}{2}$ and four hyperbolic saddles at the remaining points. The phase portrait % corresponding to each item stated in the theorem 
%region $R1$ in Fig. \ref{fig9}
is as shown in Fig. \ref{fig11}, (left). 
\begin{figure}[h!!!]
\centering
\includegraphics[width=8cm,clip]{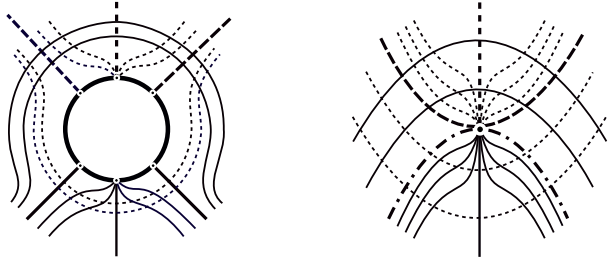}\\
\caption{ \label{fig11} (Left) Local phase portraits of $Y_i$, $(i=1,2)$ when the discriminant $\delta$ has an $A_3^+$ singularity at the Gauss cusp point. (Right) Configurations of the affine curvature lines  via blowing-down of the solutions of equation \eqref{bde_r-t}.  }
\end{figure}
The blowing-down $\varphi^*$  of the integral curves of the BDE are  as  shown in Fig. \ref{fig11}, right.

%%%-------------------------------------------------------------------------------------------------------------------------
%  Second case
%%%-------------------------------------------------------------------------------------------------------------------------
\item[2.] When $kq_{40}-4q_{21}^2<0$, the discriminant function of the BDE given by equation \eqref{BDE_N_F}has a $A_3^{-}$ singularity. The discriminant set $\delta=0$ (double $\xi$-direction set), is formed by two tangent curves given by
\begin{equation*}
v=\beta_iu^2+O(3),\quad i=1,2,\quad\text{where}\quad\beta_i=-\frac{b_{20}+(-1)^i}{b_{01}}.
\end{equation*}
The BDE in the variables $r$, $t$, after dividing by $r^3$ is given by
\begin{equation}\label{bde_r-t2}
A(t)dr^2 + 2rB(t)drdt + r^2C(t)dt^2=0,
\end{equation}
where 
\begin{eqnarray*}
A(t) & = & \cos(t)\left(\cos^4(t)+4\cos^2(t)\sin(t)b_{20}+4(b_{01}+1)\sin^2(t)\right), \\
B(t) & = & (3\cos^2(t)-2)(b_{01}\sin(t)+b_{20}\cos^2(t))-\cos^2(t)\sin(t)(\cos^2(t)-2), \\
C(t) & = & -\cos(t)(\cos^2(t)(\cos^2(t)-2)+2\cos^2(t)\sin(t)b_{20}+2b_{01}\sin^2(t)).
\end{eqnarray*}
The BDE \eqref{bde_r-t2} has six hyperbolic singularities in the interval $[0,2\pi]$, which are given by the roots of 
$$-\sin^2(t)P_i^+-\sin(t)+P_i^+=0,$$
%
%\ron{03 marco:nao consegui checar a equacao acima}
%
where $P_i^+$ is a solution of the equation \eqref{eq_Par_Inv_A3+}. Consider as before the vector field defined by equation \eqref{eq_VF_Y+}. We consider the expression of $b_{20}$ in term of $b_{01}$, which for this case is given by
\begin{equation}\label{eq_b20+}
b_{20}=-\frac{\sqrt{2}}{4}\frac{4b_{01}^2+13b_{01}+4}{\sqrt{\left(2b_{01}+1\right)\left(7b_{01}+2\right)}}.
\end{equation}
Thus, $b_{01}\in\left(-\infty,-\frac{1}{2}\right)\cup\left(-\frac{2}{7},\infty\right)$. The eigenvalues of the linearization of the vector field \eqref{eq_VF_Y+} are given by $A_t$ and $-2B$. The real functions $A$ and $A_t$ have simultaneous zeros if, and only if, $b_{01}(b_{01}+1)(-b_{20}^2+b_{01}+1)=0$, or 
\begin{equation*}
8b_{20}^2\cos^2(t)+\cos^4(t)-4b_{01}\cos^2(t)+4b_{01}^2-4\cos^2(t)+8b_{01}+4=0.
\end{equation*}
%
% There are six curves in the $q_{21}q_{40}$-plane ($(a_i)$ Fig. \ref{fig9}), where the type of singularities changes. By  Proposition \ref{aff_curv-lin_cusp_G}  we have expressions for $b_{01}$ in terms of $q_{21}$ and $q_{40}$ and the curves $(a_i)$ are obtained as the solutions of 
%$b_{01}=r_i$, for $r_i=-\frac{1}{4}(5+\sqrt{21}),-1, \frac{1}{4}( \sqrt{21}-5),0,\frac{1}{4}$.\\
%
The product of the eigenvalues at singular points are given by the next expressions: for $P_1^+$
\begin{equation}\label{eq_prod_e-v_p1}
\frac{3}{2}\frac{(4b_{01}-1)(2\cos^2(t)b_{01}+\cos^2(t)+7b_{01}+2)^2b_{01}^2}{(2b_{01}+1)(7b_{01}+2)^3},
\end{equation}
and for $P_2^+$
{\footnotesize
\begin{equation}\label{eq_prod_e-v_p2}
-\frac{1}{32}\frac{(4b_{01}-1)(4b_{01}^2+10b_{01}+1)(7\cos^2(t)b_{01}+8b_{01}^3+2\cos^2(t)+20b_{01}^2+16b_{01}+4)^2b_{01}^2}{(7b_{01}+2)(b_{01}+1)^5(2b_{01}+1)^3}
\end{equation}
}
The equations \eqref{eq_prod_e-v_p1} and \eqref{eq_prod_e-v_p2} give the exceptional values of $b_{01}$ where the topological type of the singularities changes. The singularities of the vector field $Y_i$ are hyperbolic saddles and nodes (it can be verified through the equations \eqref{eq_prod_e-v_p1} and \eqref{eq_prod_e-v_p2}) and the phase portraits corresponding to each case stated in the Theorem are as shown in Fig. \ref{fig15}.
\begin{figure}[H]
\centering
	\includegraphics[scale=0.5]{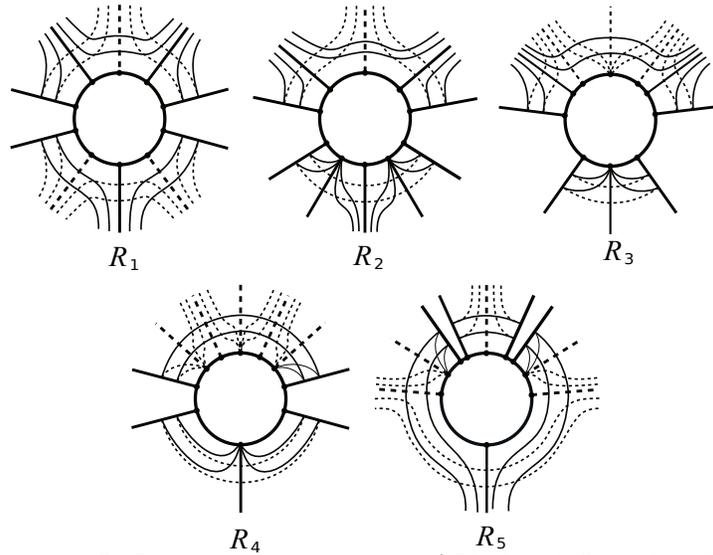}
\caption{Local phase portraits of $Y_i$, $i=1,2$, and global resolution of the affine curvature lines near a Gauss cusp point when the discriminant $\delta$ has an $A_3^-$ singularity at this point.}\label{fig15}
\end{figure}

As in the first case, the blow-up of the   affine principal configurations, given by equation \eqref{bde_r-t2}, has also six hyperbolic singularities (saddles and  nodes) and the number, types and relative position of saddle and nodes are as shown in Figs    \ref{fig8} and \ref{fig15}.

The construction of the topological equivalence between the binary differential equation of affine curvature lines and the normal forms stated can be performed using the method of canonical regions, see \cite{GS-1982}. \qed  
%
%\begin{figure}[h!!!]
%\includegraphics[width=11cm,clip]{Pull-back_Blow_A3-.eps}\\
%\caption{Configurations of the integral curves of the BDE $u^3du2+2(b_{01}v+b_{20}u^2)dudv+udv^2=0$ via blow-down of the solutions of \eqref{bde_r-t2}.(\cite{Tari2007})}\label{fig16}
%\end{figure}
%
\end{proof}

\section*{Acknowledgements} 
 The first author is supported by the CAPES project grant number\\$88887$.$136371$$/2017$-$00$-$465591/2014$-$0$-\textit{INCT de Matem\'atica}, during a post-doctoral period at IME-UFG, Goi\^ania, Brazil. The second and third authors are fellows of CNPq.

%
%%%%%%%%%%%%%%%%%%%%%%%%%%%%%%%%%%%%%%%%%%%%%%%%%%%%%%%%%%%%%%%%%%%%%%%%%%%
%%%%  Bibliography
%%%%%%%%%%%%%%%%%%%%%%%%%%%%%%%%%%%%%%%%%%%%%%%%%%%%%%%%%%%%%%%%%%%%%%%%%%%
%
%\ron{Martin favor  checar e atualizar a   bibliografia direta no arquivo tex. Ja tinha feito isso antes mas se perdeu. Inseri novamente}
%\martin{ainda nao chequei}

\vskip 2cm

%\nocite{*}

%\bibliographystyle{acm}    

%\bibliography{refAffine}

\section*{Appendix}

In this section the proof of Proposition \ref{aff_curv-lin_cusp_G} will be given.\\

A direct calculation shows that the  BDE of affine curvature lines in the parametrization $X(u,v)=(u,v,h(u,v))$, see equation  \eqref{eq-param-parab}, is given by
\begin{eqnarray*}
	&  & \left(a_{11}uv+a_{02}v^2+\bar{A}_3+O(4)\right)du^2+2\left(\hat{b}_{01}v+\bar{B}_2+\bar{B}_3+O(4)\right)dudv+\\
	&  & +\left(c_{10}u+c_{01}v+\bar{C}_2+\bar{C}_3+O(4)\right)dv^2=0,
\end{eqnarray*}
where the $\bar{A}_i$, $\bar{B}_j$ and $\bar{C}_k$ are homogeneous polynomials in the $u,v$ variables of degree $i,j,k$ respectively and the coefficients $a_{i_1j_1}$, $b_{i_2j_2}$ and $c_{i_3j_3}$ are functions of the  coefficients $q_{ij}$ of the parametrization $X$. We make a smooth change of coordinates of the form 
\begin{equation*}
\begin{cases*}
& $u= \alpha_1U+\alpha_2V+p_2(U,V))$, \\
& $v= \beta_1U+\beta_2V+q_2(U,V)$,
\end{cases*}
\end{equation*}
where $p_2$ and $q_2$ are homogeneous polynomials in the $U,V$ variables of degree $2$. We multiply the new BDE by $1+\gamma_1U+\gamma_2V$ and by a suitable choosing of $\alpha_i$, $\beta_1$, $\gamma_1$, $\gamma_2$, $p_2$ and $q_2$, we rewrite the BDE as
\begin{eqnarray*}
	&  & \left(\hat{A}_3(U,V)+O(4)\right)dU^2+2\left(\tilde{b}_{01}V+\tilde{b}_{20}U^2+\hat{B}_3+O(4)\right)dUdV+\\
	&  & +\left(U+\hat{C}_3+O(4)\right)dV^2=0,
\end{eqnarray*}
where $\hat{A}_3$, $\hat{B}_3$ and $\hat{C}_3$ are homogeneous polynomials of degree $3$ (for more details see \cite{Tari2007}). Finally we consider the change of coordinates
\begin{equation*}
\begin{cases*}
& $U= u+p_3(u,v))$, \\
& $V= v+q_3(u,v)$,
\end{cases*}
\end{equation*}
and again multiply the new equation by $1+\gamma_{20}u^2+\gamma_{11}uv+\gamma_{02}v^2$. 
%\ron{Como escolher?}
%
By appropriate values $\gamma_{20}$, $\gamma_{11}$, $\gamma_{02}$, and the polynomials $p_3(u,v)$ and $q_3(u,v)$, obtained solving the linear homological equations, we can rewrite the BDE as
%
%\ron{18 mar:Foi feito?  }
%
%\ron{13 mar: Martin favor checar o final desta prova. Calcular  o coeficiente $b_{20}$ a partir do ${\hat  b}_{20}$ e $A_3$}
\begin{equation*}
\left(A_{30}u^3+O(4)\right)du^2+2\left(b_{01}v+\hat{b}_{20}u^2+\hat{b}_{30}u^3+O(4)\right)dudv+\left(u+O(4)\right)dv^2,
\end{equation*}
where   $b_{01}=-\frac{1}{14}\frac{9q_{21}^2-4q_{40}k}{3q_{21}^2-q_{40}k}$,
\begin{equation}\label{eq_beta20}
b_{20}=-\frac{1}{10976}\frac{411q_{21}^4-195q_{21}^2kq_{40}+20k^2q_{40}^2}{\beta_{2}^5k^5q_{21}^3(3q_{21}^2-kq_{40})^2(9q_{21}^2-4kq_{40})},
\end{equation}
\begin{equation*}
A_{30}=\frac{1}{21952}\frac{4q_{21}^2-kq_{40}}{\beta_{2}^{10}k^{10}q_{21}^4(3q_{21}^2-kq_{40})^2(9q_{21}^2-4kq_{40})^2}.
\end{equation*}
Thus, fixing the sign of $4q_{21}^2-kq_{40}$ we can reduce $A_{30}$ to $\pm1$ by a suitable choose of $\beta_2$ as we show next: if $4q_{21}^2-kq_{40}<0$, for
\begin{equation*}
\beta_2=\beta_2^-=\left(\frac{1}{21952}\frac{-\left(4q_{21}^2-kq_{40}\right)}{A_{30}k^{10}q_{21}^4(3q_{21}^2-kq_{40})^2(9q_{21}^2-4kq_{40})^2}\right)^{\frac{1}{10}}
\end{equation*}
we obtain $A_{30}=-1$. In the case $4q_{21}^2-kq_{40}>0$, for
\begin{equation*}
\beta_2=\beta_2^+=\left(\frac{1}{21952}\frac{4q_{21}^2-kq_{40}}{A_{30}k^{10}q_{21}^4(3q_{21}^2-kq_{40})^2(9q_{21}^2-4kq_{40})^2}\right)^{\frac{1}{10}},
\end{equation*}
we have $A_{30}=1$. Replacing $\beta_2^-$ and $\beta_2^+$ in the equation \eqref{eq_beta20}, we obtain the desired expresions and this complete the proof. \qed
\begin{remark}
In the proof of Theorem \ref{Thm_confg_princ_affins} we use other expressions for $b_{20}$ and $\bar{b}_{20}$ in each case. Note that $b_{01}$, $\bar{b}_{20}$ and $b_{20}$ are expressed in terms of $q_{21}$ and $q_{40}$. In fact, we replace the values of $b_{01}$ in $b_{20}$ and $\bar{b}_{20}$, and obtain the next expressions to them, in terms only of $b_{01}$:
{\small
\begin{equation*}
\bar{b}_{20}=-\frac{\sqrt{2}}{4}\frac{(4b_{01}^2+13b_{01}+4)}{\sqrt{-(2b_{01}+1)(7b_{01}+2)}}\quad\text{and}\quad b_{20}=-\frac{\sqrt{2}}{4}\frac{(4b_{01}^2+13b_{01}+4)}{\sqrt{(2b_{01}+1)(7b_{01}+2)}}.
\end{equation*}
}
We have $\bar{b}_{20}$ when $4q_{21}^2-kq_{40}<0$ and $b_{20}$ when $4q_{21}^2-kq_{40}>0$, respectively singularities of the discriminant function of associate binary differential equation of types $A_3^+$ or $A_3^-$ at the origin.
\end{remark}

\end{document}